\documentclass{amsart}
\usepackage[utf8]{inputenc}
\usepackage{amsmath}
\usepackage{mathtools}
\usepackage{amsfonts}
\usepackage{amssymb}
\usepackage{amsthm} 
\usepackage{csquotes}
\usepackage[english]{babel}
\usepackage{lmodern}
\usepackage[shortlabels]{enumitem}
\usepackage{xcolor}
\usepackage[bookmarks=true]{hyperref}
\usepackage{graphicx}
\usepackage[T1]{fontenc}

\title{Canonical fibrations of contact metric $(\kappa,\mu)$-spaces}
\author{E. Loiudice and A. Lotta}
\date{}

\newtheorem{theorem}{Theorem}
\newtheorem{proposition}{Proposition}
\newtheorem{remark}{Remark}

\newtheorem{corollary}{Corollary}
\newtheorem{definition}{Definition}

\DeclareMathOperator{\id}{id}

\newcommand{\di}{\mbox{d}}
\newcommand{\kk}{\kappa}

\begin{document}

\begin{abstract} We present a classification of the complete, simply connected,  contact metric $(\kappa,\mu)$-spaces as homogeneous
contact metric manifolds, by studying the base space of their canonical fibration. 
According to the value of the Boeckx invariant, it turns out that the base is a complexification or a 
para-complexification of a sphere or of a hyperbolic space. 
In particular, we obtain a new homogeneous representation of the contact metric $(\kappa,\mu)$-spaces with Boeckx invariant less than $-1$.
\end{abstract}

\maketitle

\noindent
 {\small
{\em Mathematics Subject Classification (2000)}: Primary 53C25, 53D10;  Secondary 53C35, 53C30.

\noindent
{\em Keywords and phrases}: contact metric $(\kappa,\mu)$-space, regular contact manifold.
}

\section{Introduction}




The study of the curvature tensor of associated metrics to a contact form is a central theme in contact metric geometry. 
Actually some important classes of contact metric manifolds can be defined using it.
%
We recall for example that Sasakian manifolds, the odd-dimensional analogues of K\"ahler manifolds, can be characterized by:
\begin{equation*}
 R(X,Y)\xi=\eta(Y)X-\eta(X)Y
\end{equation*}
where $X,Y$ are any vector fields and $\xi$ denotes the characteristic vector field of the contact metric manifold. 
A meaningful generalization of this curvature condition is 
\begin{equation*}
 R(X,Y)\xi=\kk\big(\eta(Y)X-\eta(X)Y\big)+\mu(\eta(Y)hX-\eta(X)hY\big)
\end{equation*}
where $\kk,\mu$ are real numbers and $2h$ is the Lie derivative of the structure tensor $\varphi$ in the direction of the characteristic vector field $\xi$.

The contact metric manifolds with this property were introduced by Blair, Koufogiorgos and Papantoniou 
in \cite{Blair K P}, and are called  {\em contact metric $(\kk,\mu)$--spaces} in the literature.
These spaces have
many interesting geometric properties; first of all, they are stable under $\mathcal{D}$-homothetic deformations and moreover in the non-Sasakian case, i.e., 
when $\kk\neq1$, the curvature tensor of the associated metric is completely determined. 
Looking at contact metric manifolds as strongly pseudo-convex (almost) $CR$ manifolds, in \cite{DileoLotta} Dileo and the second author  showed that
the $(\kk,\mu)$ condition is equivalent to the local $CR$-symmetry with respect to the Webster
metric, according to the general notion introduced by Kaup and Zaitsev in \cite{Kaup Zaitsev}.
In this context, another characterization was given by Boeckx and Cho in terms of the parallelism of the Tanaka--Webster curvature \cite{Boeckx-Cho}. 

\smallskip
Boeckx gave a crucial contribution to the problem of classifying these manifolds; 
after showing that every non-Sasakian contact $(\kk,\mu)$-space is locally homogeneous and strongly locally $\varphi$--symmetric \cite{Boeckx locally phi-symmetric},  
in \cite{Boeckx} he defined a scalar invariant $I_M$ which completely determines a contact $(\kk,\mu)$--space $M$ locally up 
to equivalence and up to $\mathcal{D}$-homotetic deformations of its contact metric structure. 

 \smallskip
A standard example is the tangent sphere bundle $T_1M$ of a Riemannian manifold $M$ with constant sectional curvature $c\not =1$.
Being an hypersurface of $TM$, which is equipped with a natural almost K\"ahler structure $(J,G)$, where $G$ is the Sasaki metric, $T_1M$ inherits a standard
contact metric structure (for more details, see for instance
\cite{Blair2010Book}). In particular, the Webster metric $g$ of $T_1M$ 
is a scalar multiple of $G$. The corresponding Boeckx invariant is given by:
$$
I_{T_1M}=\frac{1+c}{|1-c|}.
$$
Hence, as $c$ varies in $\mathbb{R} \smallsetminus \{1\}$, $I_{T_1M}$ assumes all the real values strictly greater than $-1$.

The case $I\leqslant -1$ seems to lead to models of different nature.
Namely, Boeckx found examples of contact metric $(\kk,\mu)$-spaces,  for every value of the invariant $I\leqslant -1$, namely
a two parameter family of (abstractly constructed) Lie groups with a left-invariant contact metric structure.
However, he gave no geometric description of these examples; in particular, to our knowledge, nothing can be found
in the literature regarding the topological structure of these manifolds.

\smallskip

One of the first aims of this paper is to fill this gap, showing that the simply connected, complete contact metric 
$(\kk,\mu)$--spaces with $I< -1$, with dimension $2n+1$ are exhausted by a one parameter family of invariant 
contact metric structures on the homogeneous space 
$$SO(n,2)/SO(n).$$
Actually, we provide a unified treatment of all the models with $I_M\not =\pm 1$.
Our classification is accomplished intrisically, by studying the canonical fibration of non Sasakian contact metric $(\kk,\mu)$-spaces with Boeckx invariant $I_M\neq \pm 1$ and endowing the base spaces of a canonical connection. 
Here we refer to the fibration $M\to M/\xi$
over the leaf space of the foliation determined by the Reeb vector field; as such, it depends only on the contact form of $M$.
First, in Theorem~\ref{characterization of (k,mu)}, non Sasakian contact metric $(\kk,\mu)$-spaces with Boeckx invariant $\neq \pm 1$ are characterized by admitting a transitive Lie group of automorphisms whose Lie algebra $\mathfrak{g}$ has a (canonical) symmetric decomposition. 
This decomposition yields a reductive decomposition for the base space $B$ of the canonical fibration and the associated 
canonical connection makes $B$ an affine symmetric space (Corollary~\ref{corb}).

Next we show that $B$ admits a uniquely determined
standard invariant complex or para-complex structure, by which it is a complexification or a para--complexification of the sphere $S^n$ 
or of the hyperbolic space $\mathbb{H}^n$, according to the value of the Boeckx invariant of the $(\kk,\mu)$--space. After identifying the 
possible base spaces $B$, in the final section we construct explicitly our models as homogeneous contact metric manifolds fiberings onto them.
In conclusion, we obtain the following classification list:

\begin{center}
\begin{tabular}{| c | c| c|}
\hline 
\multicolumn{3}{|c|}{Simply connected complete contact metric $(\kk,\mu)$-spaces with $I_M\not =\pm 1$} \\
\hline
Boeckx invariant & Model space       &    Base space                              \\ \hline
                 &                    &\\
 $I_M>1$         &   $SO(n+2)/SO(n)$ &    $SO(n+2)/(SO(n)\times SO(2))$   \\ 
                 &                    &   \\  \hline
                 &                    &\\
 $-1<I_M<1$      & $SO(n+1,1)/SO(n)$ &   $SO(n+1,1)/(SO(n)\times SO(1,1))$    \\ 
                 &                    &\\ \hline
                 &                    &\\  
 $I_M<-1$        & $SO(n,2)/SO(n)$    & $SO(n,2)/(SO(n)\times SO(2))$   \\ 
                 & &\\ \hline     
\end{tabular}
\end{center}

\medskip
This table also provides a new geometric interpretation of the Boeckx invariant.


\section{Preliminaries}
 
 Let $M$ be a odd-dimensional smooth manifold. An \emph{almost contact structure} on $M$ is a triple consisting of a $(1,1)$ tensor field $\varphi$, a vector field $\xi$ and a $1$-form $\eta$ satisfying:
 \begin{equation*}
 \varphi ^2=-\id+\eta \otimes \xi, \quad \eta(\xi)=1.
 \end{equation*}
 An almost contact manifold always admits a \emph{compatible metric}, namely a Riemannian metric $g$ such that
 \begin{equation*}
  g(\varphi X,\varphi Y)=g(X,Y)-\eta(X)\eta(Y),
 \end{equation*}
 for every vector fields $X,Y$ on $M$. If such a metric $g$ satisfies also
 \begin{equation*}
   \di\eta(X,Y)=g(X, \varphi Y),
 \end{equation*}
 then $(\varphi,\xi,\eta,g)$ is called a \emph{contact metric structure} on $M$. In this case $\eta$ is a contact
 form; we shall denote by $D$ the corresponding contact
 distribution $D=\mathrm{ker}(\eta)$ and by $\mathcal{D}$ the module of smooth sections of $D$.
 
 A contact metric manifold $M$ is said to be a \emph{$K$-contact manifold} if its characteristic vector field $\xi$ is Killing. 
 This condition is equivalent to the vanishing of the $(1,1)$ tensor field
 \begin{equation*}
  h:=\frac{1}{2}\mathcal{L}_{\xi}\varphi,
 \end{equation*}
 $\mathcal{L}_{\xi}$ being Lie differentiation in the direction of $\xi$.
 
 If the curvature tensor $R$ of a contact metric manifold $M$ satisfies the following condition
 \begin{equation*}
  R(X,Y)\xi=\eta(Y)X-\eta(X)Y,
 \end{equation*}
 for every vector fields $X,Y$ on $M$, then $M$ is a \emph{Sasakian} manifold. In this case $\xi$ is a Killing vector field and hence $M$ is a $K$-contact manifold.
 
 \medskip
  A \emph{contact metric $(\kk,\mu)$-space} is a contact metric manifold $(M,\varphi, \xi,\eta,g)$ such that
  \begin{equation*}
   R(X,Y)\xi=\kk(\eta(Y)X-\eta(X)Y)+\mu(\eta(Y)hX-\eta(X)hY),
  \end{equation*}
 where $X,Y\in \mathfrak{X}(M)$ are arbitrary vector fields and $\kk$, $\mu$ are real numbers.
 The $(\kk,\mu)$ condition is invariant under $D_a$-homothetic deformations. 
 We recall that a $D_a$-homothetic deformation of a contact metric manifold $(M, \varphi, \xi,\eta,g)$ is given by the following changing of the structural tensors of $M$:
 \begin{equation}
  \bar{\eta}:=a\eta, \quad \bar{\xi}:=\frac{1}{a}\xi,  \quad \bar{g}=ag+a(a-1)\eta \otimes \eta
 \end{equation}
 where $a$ is a positive constant.

 By direct computations one can check that a $D_a$-homothetic deformation transforms a contact metric $(\kk,\mu)$ space in a contact metric $(\bar{\kk},\bar{\mu})$ space where
 \begin{equation*}
  \bar{\kk}=\frac{\kk+a^2-1}{a^2}, \quad \bar{\mu}=\frac{\mu+2a-2}{a}.
 \end{equation*}
 In particular, a $D_a$-homothetic deformation of a contact metric manifold \; $(M,\varphi,\xi,\eta,g)$ satisfying $R(X,Y)\xi=0$ yields:
 \begin{equation*}
   \bar{R}(X,Y)\xi=\frac{a^2-1}{a^2}(\bar{\eta}(Y)X-\bar{\eta}(X)Y)+\frac{2a-2}{a}(\bar{\eta}(Y)\bar{h}X-\bar{\eta}(X)\bar{h}Y).
  \end{equation*}
 
  \smallskip
  
 In \cite{Blair K P} the authors proved the following Theorem.
 
 \begin{theorem}\label{th (k,mu) Blair}
  Let $(M,\varphi, \xi,\eta,g)$ be a contact metric $(\kk,\mu)$ manifold. Then $\kk\leqslant 1$. Moreover, if $\kk=1$ then $h=0$ and $(M,\varphi, \xi,\eta,g)$ is Sasakian. If $\kk<1$, the contact metric structure is not Sasakian and $M$ admits three mutually orthogonal integrable distributions $\mathcal{D}(0)$, $\mathcal{D}(\lambda)$ and $\mathcal{D}(-\lambda)$ corresponding to the eigenspaces of $h$, where $\lambda=\sqrt{1-\kk}$.
 \end{theorem}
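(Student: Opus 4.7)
The plan is to exploit the classical contact metric identity
\[
\varphi\, l\, \varphi - l = 2(\varphi^2 + h^2), \qquad l(X) := R(X,\xi)\xi,
\]
valid on any contact metric manifold, together with the standard algebraic relations $\varphi h + h \varphi = 0$, $h\xi = 0$, $\eta\circ h = 0$, and $\varphi^2 = -\id + \eta\otimes\xi$. First I would rewrite the $(\kk,\mu)$--condition with $Y = \xi$ in the equivalent form $l = -\kk\,\varphi^2 + \mu\, h$. Substituting into the above identity, and using $\varphi^3 = -\varphi$ and $\varphi h \varphi = h$ (both immediate from the relations just recalled), the computation collapses to the single key relation
\[
h^2 = (\kk - 1)\,\varphi^2 = (1-\kk)(\id - \eta\otimes\xi).
\]

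From this identity essentially everything follows. Since $h$ is $g$--symmetric, $h^2$ is positive semidefinite, while $(1-\kk)(\id - \eta\otimes\xi)$ restricted to the contact distribution $D$ equals $(1-\kk)\,\id_D$; this forces $\kk \leqslant 1$. If $\kk = 1$ then $h^2 = 0$, and symmetry of $h$ gives $h = 0$; inserting $h = 0$ back into the $(\kk,\mu)$--condition produces exactly the Sasakian curvature identity for $\xi$. If $\kk < 1$, set $\lambda := \sqrt{1-\kk}$; then $h|_D$ satisfies $(h|_D)^2 = \lambda^2\,\id_D$, so $h$ is diagonalizable with eigenvalues $0, \lambda, -\lambda$, the $0$--eigenspace being $\spn(\xi)$. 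Mutual $g$--orthogonality of the three eigendistributions is immediate from the symmetry of $h$, and the structure is non--Sasakian precisely because $h \ne 0$.

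The main obstacle is the integrability of $\mathcal{D}(\lambda)$ and $\mathcal{D}(-\lambda)$. For $X,Y \in \mathcal{D}(\lambda)$, the containment $[X,Y] \in D$ would follow from $\di\eta(X,Y) = g(X,\varphi Y)$ together with the observation $\varphi(\mathcal{D}(\pm\lambda)) = \mathcal{D}(\mp\lambda)$, a consequence of $h\varphi = -\varphi h$, which gives $g(X,\varphi Y) = 0$ by $h$--orthogonality of distinct eigenspaces. It then remains to show that the $\mathcal{D}(-\lambda)$--component of $[X,Y]$ vanishes. For this I would derive, from $\nabla_X \xi = -\varphi X - \varphi h X$ together with the second Bianchi identity applied to $R(\cdot,\cdot)\xi$, an explicit expression for $(\nabla_X h)Y - (\nabla_Y h)X$; when $X,Y \in \mathcal{D}(\lambda) \subset \ker\eta$ this symmetrized covariant derivative should reduce to a scalar multiple of $\xi$, and expanding $(\nabla_X h)Y - (\nabla_Y h)X$ using $h\,[X,Y]$ then forces $h[X,Y] = \lambda[X,Y]$. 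The argument for $\mathcal{D}(-\lambda)$ is formally identical. This last step, linking the commutator of sections to the first--order behaviour of $h$, is the only genuinely computational part of the proof and is where I expect the main difficulty to lie.
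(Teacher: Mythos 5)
First, a point of comparison: the paper does not prove this statement at all --- it is quoted from Blair--Koufogiorgos--Papantoniou \cite{Blair K P} --- so your proposal can only be measured against the standard argument in that reference, which it essentially reproduces. The algebraic half of your plan is complete and correct: setting $Y=\xi$ in the $(\kappa,\mu)$ condition gives $l=-\kappa\varphi^2+\mu h$, and substituting into $\varphi l\varphi-l=2(\varphi^2+h^2)$, using $\varphi h\varphi=h$ (which holds because $\eta\circ h=0$), yields $h^2=(\kappa-1)\varphi^2$. The inequality $\kappa\leqslant 1$, the dichotomy $\kappa=1$ versus $\kappa<1$, the identification of $\ker h$ with $\mathbb{R}\xi$ when $\kappa<1$, and the mutual orthogonality of the eigendistributions all follow as you say; for the Sasakian conclusion you are implicitly invoking the fact that $R(X,Y)\xi=\eta(Y)X-\eta(X)Y$ forces a contact metric manifold to be Sasakian, which the paper records in its preliminaries, so that is acceptable.

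The gap is in the integrability step. Your reduction is sound: $[X,Y]\in D$ for $X,Y\in\mathcal{D}(\lambda)$ follows from $\di\eta(X,Y)=g(X,\varphi Y)=0$, and since $\kappa$ is constant one has $(\nabla_Xh)Y-(\nabla_Yh)X=\lambda[X,Y]-h[X,Y]$, so knowing that the left-hand side lies in $\mathbb{R}\xi$ does force $[X,Y]\in\mathcal{D}(\lambda)$. But the formula you need --- namely $(\nabla_Xh)Y-(\nabla_Yh)X=(1-\kappa)\bigl(2g(X,\varphi Y)\xi+\eta(X)\varphi Y-\eta(Y)\varphi X\bigr)+(1-\mu)\bigl(\eta(X)\varphi hY-\eta(Y)\varphi hX\bigr)$ --- is asserted rather than derived, and the second Bianchi identity is not the tool that produces it. The standard derivation writes $R(X,Y)\xi$ as the antisymmetrized second covariant derivative of $\xi$ using $\nabla_X\xi=-\varphi X-\varphi hX$; this leaves terms in $(\nabla_X\varphi)Y$, $(\nabla_X\varphi)hY$ and $\varphi\bigl((\nabla_Xh)Y-(\nabla_Yh)X\bigr)$, and to isolate the last one you must first prove $(\nabla_X\varphi)Y=g(X+hX,Y)\xi-\eta(Y)(X+hX)$ on a $(\kappa,\mu)$-space, starting from the general expression of $2g((\nabla_X\varphi)Y,Z)$ on a contact metric manifold. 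That lemma is the genuinely computational heart of the theorem and is exactly what your sketch skips; until it is supplied, the integrability of $\mathcal{D}(\pm\lambda)$ is not established.
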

 
 The explicit expression of the Riemannian curvature tensor of a non-Sasakian contact
 metric $(\kk,\mu)$-manifold is known (see \cite[Theorem~5]{Boeckx locally phi-symmetric}):
 
 \begin{theorem}\label{(k,mu) curvature}
  Let $M$ be a contact metric $(\kk,\mu)$-space. If  $\kk\neq1$ then:
 \begin{equation*}
  \begin{aligned}
   g(R(X, Y)Z, W) =& \Big(1-\frac{\mu}{2} \Big)(g(Y, Z)g(X, W)- g(X, Z)g(Y, W))\\
                             &+ g(Y, Z)g(hX, W)-g(X, Z)g(hY, W)\\
                             & -g(Y, W)g(hX, Z) + g(X, W)g(hY, Z)\\
                             & \frac{1-\mu / 2}{1-\kk}(g(hY, Z)g(hX, W)- g(hX, Z)g(hY, W))\\
                             & -\frac{\mu}{2} (g(\varphi Y, Z)g(\varphi X, W)- g(\varphi X, Z)g(\varphi Y, W))\\
                             & \frac{\kk-\mu / 2}{1-\kk} (g(\varphi hY, Z)g(\varphi hX, W) - g(\varphi hY, W)g(\varphi hX, Z))\\
                             & +\mu g(\varphi X, Y)g(\varphi Z, W)\\
                             &+\eta(X)\eta(W)\Big( \Big( \kk- 1 + \frac{\mu}{2}\Big)g(Y, Z) + (\mu-1)g(hY, Z)\Big)\\
                             &-\eta(X)\eta(Z)\Big( \Big( \kk- 1 + \frac{\mu}{2}\Big)g(Y, W) + (\mu-1)g(hY, W)\Big)\\
                             &+\eta(Y)\eta(Z)\Big( \Big( \kk- 1 + \frac{\mu}{2}\Big)g(X, W) + (\mu-1)g(hX, W)\Big)\\
                             &-\eta(Y)\eta(W)\Big( \Big( \kk- 1 + \frac{\mu}{2}\Big)g(X, Z) + (\mu-1)g(hX, Z)\Big),\\
  \end{aligned}
 \end{equation*}
 \end{theorem}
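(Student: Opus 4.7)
The plan is to reduce the verification of the formula to a computation on a frame adapted to the eigenspace decomposition of $h$ guaranteed by Theorem~\ref{th (k,mu) Blair}. First I would collect the structural identities available on a non-Sasakian contact metric $(\kk,\mu)$-space, which follow from the $(\kk,\mu)$ condition together with the general contact metric identity $\nabla_X\xi = -\varphi X - \varphi hX$: namely $h^2 = (\kk-1)\varphi^2$ (so $h^2=(1-\kk)\id$ on $D$), the anticommutation $\varphi h + h\varphi=0$, and the explicit expressions for $\nabla\varphi$ and $\nabla h$ derived in \cite{Blair K P}. These are the main tools.

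Next I would fix a point $p\in M$ and choose a $\varphi$-basis adapted to the $g$-orthogonal decomposition
$$ T_pM=\mathbb{R}\xi_p\oplus \mathcal{D}(\lambda)_p\oplus \mathcal{D}(-\lambda)_p, $$
consisting of unit vectors $e_1,\dots,e_n\in \mathcal{D}(\lambda)_p$ with $\varphi e_i\in \mathcal{D}(-\lambda)_p$. The integrability of $\mathcal{D}(\pm\lambda)$ allows one to compute the sectional curvatures of $2$-planes lying entirely in one of these distributions through the Gauss equation for the corresponding leaves; the $(\kk,\mu)$ condition, combined with the first Bianchi identity, delivers at once every component $R(X,Y)Z$ in which one of the arguments equals $\xi$; and the remaining ``mixed'' components $R(e_i,\varphi e_j)e_k$ and $R(e_i,e_j)\varphi e_k$ are extracted from the explicit formulas for $\nabla\varphi$ and $\nabla h$ by a direct second-covariant-derivative computation.

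The verification then consists in checking that the right-hand side of the stated formula, viewed as a $(0,4)$-tensor, produces the computed values on every case obtained by letting $X,Y,Z,W$ range over the three blocks $\xi$, $\mathcal{D}(\lambda)$, $\mathcal{D}(-\lambda)$; by the symmetries of $R$ it is enough to consider a short list of configurations. On eigenvectors one has $g(hX,Y)=\pm\lambda\, g(X,Y)$ and the $\eta$-terms vanish off $\xi$, which drastically simplifies the formula in each case.

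The main obstacle I expect is the bookkeeping. The stated expression carries eleven terms with delicate coefficients such as $(1-\mu/2)/(1-\kk)$ and $(\kk-\mu/2)/(1-\kk)$, and one must show that these are exactly the values produced by the identities for $\nabla\varphi$, $\nabla h$ once combined with the relation $\lambda^2=1-\kk$. Once the coefficients are pinned down on pairs in $\mathcal{D}(\lambda)\times\mathcal{D}(\lambda)$ and on one mixed configuration, the curvature symmetries together with the intertwining of $\mathcal{D}(\lambda)$ and $\mathcal{D}(-\lambda)$ by $\varphi$ take care of the remaining cases.
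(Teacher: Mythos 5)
The paper does not actually prove this statement: it is imported verbatim from Boeckx (\cite{Boeckx locally phi-symmetric}, Theorem~5), so your sketch has to be measured against that derivation. Your overall strategy --- reduce everything to the $h$-eigenspace decomposition of Theorem~\ref{th (k,mu) Blair}, use $\nabla_X\xi=-\varphi X-\varphi hX$ together with the explicit $\nabla h$ and $\nabla\varphi$ of \cite{Blair K P}, and determine the components of $R$ blockwise --- is indeed the standard route, and the parts of your plan that handle the components involving $\xi$ (via the $(\kk,\mu)$ condition and the pair symmetry of $R$) and the ``off-diagonal'' components (via the Ricci identity applied to $h$, i.e.\ $[R(X,Y),h]=\nabla^2_{X,Y}h-\nabla^2_{Y,X}h$) are sound.

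The genuine gap is the step where you claim that integrability of $\mathcal{D}(\pm\lambda)$ plus the Gauss equation yields the sectional curvatures of $2$-planes contained in a single eigendistribution. From the formula for $\nabla h$ one does get that the leaves of $\mathcal{D}(\pm\lambda)$ are totally geodesic, but then the Gauss equation merely identifies the ambient sectional curvature of such a plane with the \emph{intrinsic} curvature of the leaf, which is not known a priori --- it is exactly the quantity you are trying to compute, so the argument is circular. Moreover these ``diagonal-block'' components are precisely the ones invisible to the Ricci identity for $h$: for $Z,W$ in the same eigenspace, $g([R(X,Y),h]Z,W)=0$ identically, so they cannot be recovered from the mixed components via $\nabla h$ either. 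You need an additional input here: either the Ricci identity applied to $\varphi$ (whose commutator $[R(X,Y),\varphi]$ interchanges the two eigenspaces and, combined with $g(R(X,Y)Z,W)=g(R(Z,W)X,Y)$ and the components already known, closes the system --- this is essentially Boeckx's argument), or the direct computation of the eigenplane sectional curvatures $K(X,Y)=2(1\pm\lambda)-\mu$ carried out in \cite{Blair K P}. With that replacement the case-by-case verification you describe does go through; as written, the plan cannot determine the coefficients $1-\mu/2$ and $(1-\mu/2)/(1-\kk)$ of the first and fourth lines of the formula.
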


 \bigskip
 The class of non-Sasakian contact metric $(\kk,\mu)$-spaces coincides with the class of contact metric manifolds
 with non vanishing $\eta$-parallel tensor $h$, according to \cite[Lemma~3.8]{Blair K P} and the following result of  Boeckx and Cho \cite{Bo-Cho}:
 \begin{theorem}\label{Bo-Ch}
  Let $(M,\varphi, \xi,\eta,g)$ be a contact metric manifold which is not $K$-contact. If $g((\nabla_Xh)Y,Z)=0$ for every vector fields $X,Y,Z$ orthogonal to $\xi$, then $M$ is a contact metric $(\kk,\mu)$-space. 
 \end{theorem}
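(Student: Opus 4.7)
The plan is to derive the $(\kk,\mu)$-form of $R(X,Y)\xi$ directly from the standard structure equations of a contact metric manifold together with the hypothesis that $h$ is $\eta$-parallel. It is natural to split the verification of the $(\kk,\mu)$ identity into the case $X,Y\in\mathcal{D}$ (where it predicts $R(X,Y)\xi=0$) and the mixed case $X=\xi$, $Y\in\mathcal{D}$ (where it predicts $R(\xi,Y)\xi=-\kk Y-\mu hY$).

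For the first case I would substitute the universal identity $\nabla_X\xi=-\varphi X-\varphi hX$ into
\[
R(X,Y)\xi=\nabla_X\nabla_Y\xi-\nabla_Y\nabla_X\xi-\nabla_{[X,Y]}\xi,
\]
rewriting $R(X,Y)\xi$ as the alternation in $X,Y$ of $-(\nabla_X\varphi)Y-(\nabla_X\varphi)(hY)-\varphi(\nabla_Xh)Y$. Pairing with an arbitrary $Z\in\mathcal{D}$, the terms carrying $\nabla h$ are killed directly by the hypothesis (after using that $\varphi$ preserves $\mathcal{D}$), while the $\nabla\varphi$-contributions are handled via the standard formula for $(\nabla_X\varphi)Y$ on a contact metric manifold, which expresses it purely in terms of $\varphi$, $h$, $\eta$ and the metric. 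On the contact distribution, that formula, combined with $\varphi h+h\varphi=0$ and $h\xi=0$, leaves only $\eta$-terms that vanish for $X,Y,Z\in\mathcal{D}$; this gives $R(X,Y)\xi=0$ on $\mathcal{D}\times\mathcal{D}$, matching the $(\kk,\mu)$-prediction on the contact distribution.

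For the mixed case I would analyse the symmetric Jacobi operator $\ell:=R(\cdot,\xi)\xi$ on $\mathcal{D}$. Standard contact metric identities (due to Blair) connect $\nabla_\xi h$, $\ell$, $h^2$ and $\varphi$, and the $\eta$-parallelism of $h$, together with the anticommutation $\varphi h+h\varphi=0$, should force $\ell|_{\mathcal{D}}$ to commute with $h$ and to be affine in $h$, so that $\ell=-\kk\,\id-\mu h$ on $\mathcal{D}$ for smooth scalars $\kk,\mu$; the same identities also yield $h^2=(\kk-1)\varphi^2$, making the eigenvalues of $h$ locally constant.

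The genuinely hard step, which I expect to be the main obstacle, is the promotion of $\kk$ and $\mu$ from smooth functions to global constants. I would plug $\ell=-\kk\,\id-\mu h$ into the second Bianchi identity, contract with $\xi$, and invoke once more that $g((\nabla_Xh)Y,Z)=0$ on $\mathcal{D}$ in order to cancel the derivative-of-$h$ terms; the surviving relations should read $X(\kk)=X(\mu)=0$ for every $X\in\mathcal{D}$, while $\xi(\kk)=\xi(\mu)=0$ should follow from a direct computation of $\nabla_\xi\ell$ via the standard formula for $\nabla_\xi h$, exploiting that $h\neq 0$ because $M$ is not $K$-contact.
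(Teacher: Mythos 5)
First, a point of reference: the paper does not prove this statement at all --- it is quoted from Boeckx--Cho's article on $\eta$-parallel contact metric spaces (reference [Bo-Cho]), so there is no internal proof to compare yours against. Judged on its own, your outline correctly reduces the $(\kk,\mu)$ condition to the two assertions $R(X,Y)\xi=0$ for $X,Y\in\mathcal{D}$ and $R(\cdot,\xi)\xi=\kk\,\id+\mu h$ on $\mathcal{D}$ with constant coefficients, and you rightly flag the constancy of $\kk$ and $\mu$ as a genuine difficulty. But both of your first two steps rest on a claim that is false.

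There is no ``standard formula for $(\nabla_X\varphi)Y$ on a contact metric manifold which expresses it purely in terms of $\varphi$, $h$, $\eta$ and the metric.'' The general identity on a contact metric manifold is
$$2g((\nabla_X\varphi)Y,Z)=g(N^{(1)}(Y,Z),\varphi X)+2\,\di\eta(\varphi Y,X)\eta(Z)-2\,\di\eta(\varphi Z,X)\eta(Y),$$
where $N^{(1)}(Y,Z)=[\varphi,\varphi](Y,Z)+2\,\di\eta(Y,Z)\xi$ contains the Nijenhuis torsion of $\varphi$, i.e.\ the integrability defect of the underlying almost $CR$ structure; this is independent data, not an algebraic expression in $\varphi,h,g,\eta$. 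The clean formula you presumably have in mind, $(\nabla_X\varphi)Y=g(X+hX,Y)\xi-\eta(Y)(X+hX)$, is a \emph{consequence} of the $(\kk,\mu)$ condition (Blair--Koufogiorgos--Papantoniou), so invoking it here is circular; indeed, for $h=0$ it reduces to the Sasakian condition, which a $K$-contact non-Sasakian manifold violates, so it cannot be a universal contact metric identity. Consequently the alternation $(\nabla_X\varphi)(Y+hY)-(\nabla_Y\varphi)(X+hX)$ does not visibly vanish and $R(X,Y)\xi=0$ on $\mathcal{D}\times\mathcal{D}$ remains unproved. The mixed case has the same problem in a different guise: by Blair's identity $\nabla_\xi h=\varphi-\varphi\ell-\varphi h^2$ one has $\ell=\id-h^2+\varphi\nabla_\xi h$ on $\mathcal{D}$, so the Jacobi operator is governed precisely by $\nabla_\xi h$, which your hypothesis (involving only directions orthogonal to $\xi$) does not constrain; ``should force $\ell$ to be affine in $h$'' is therefore an assertion, not an argument. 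Extracting control of $\nabla_\xi h$ and of the Nijenhuis term from the $\eta$-parallelism of $h$ is exactly the content of the Boeckx--Cho paper, and it requires a sustained computation with the full list of contact metric identities rather than a direct substitution.
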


 Finally, we recall also the following characterization in the context of $CR$ geometry (we refer to \cite[Section~6.4]{Blair2010Book} and \cite{LibroDragomirTomassini} for a general reference on this topic):

 \begin{theorem}\cite[Theorem~3.2]{DileoLotta}\label{DileoL}
  Let $(M, HM, J, \eta)$ be a pseudo-Hermitian manifold. Assume that the Webster metric $g_{\eta}$ is not Sasakian. The following conditions are equivalent:
  \begin{enumerate}[(1)]
   \item The Webster metric $g_{\eta}$ is locally $CR$-symmetric.
   \item The underlying contact metric structure satisfies the $(\kk, \mu)$ condition.
  \end{enumerate}
 \end{theorem}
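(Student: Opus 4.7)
The plan is to prove the equivalence by splitting it into the two implications, using the Boeckx--Cho criterion (Theorem~\ref{Bo-Ch}) for $(1) \Rightarrow (2)$ and a Tanaka--Webster parallelism argument for $(2) \Rightarrow (1)$.

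For $(1) \Rightarrow (2)$, I would fix $p \in M$ and pick a local CR-symmetry $\sigma_p$ at $p$, that is, a CR-diffeomorphism fixing $p$ whose differential acts as $-\id$ on the Levi distribution $H_pM$. Since $\sigma_p$ preserves the CR structure and the pseudo-Hermitian form $\eta$, it preserves the Webster metric $g_\eta$, the characteristic direction $\xi$, and the extended structure $\varphi$. Consequently the tensor $h = \frac{1}{2}\mathcal{L}_\xi \varphi$ and its Levi-Civita covariant derivative $\nabla h$ are invariant under $\sigma_p$. For arbitrary $X,Y,Z \in H_pM$, the invariance of $\nabla h$ combined with $\sigma_{p*}|_{H_pM} = -\id$ yields $g((\nabla_X h)Y, Z) = -g((\nabla_X h)Y, Z)$ at $p$, so this scalar must vanish. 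This is precisely the hypothesis of Theorem~\ref{Bo-Ch}, whence $(M,\varphi,\xi,\eta,g_\eta)$ is a contact metric $(\kk,\mu)$-space.

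For $(2) \Rightarrow (1)$, I would work with the Tanaka--Webster connection $\nabla^W$ associated to the pseudo-Hermitian structure, under which $\eta, \xi, \varphi, g_\eta$ are automatically parallel. Starting from the explicit Riemannian curvature formula of Theorem~\ref{(k,mu) curvature}, the idea is first to verify that $h$ is $\nabla^W$-parallel, which can be deduced by combining the Boeckx--Cho vanishing on $\mathcal{D}$ with the known torsion of $\nabla^W$, and then to rewrite the Tanaka--Webster curvature: by Theorem~\ref{(k,mu) curvature} it turns out to be a combination of the parallel tensors $\eta, g_\eta, \varphi, h$ whose coefficients depend only on $\kk$ and $\mu$, and hence is itself $\nabla^W$-parallel. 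Invoking the Kaup--Zaitsev framework of CR-symmetric spaces, the $\nabla^W$-parallelism of the pseudo-Hermitian curvature and torsion produces, at each point, a local CR-symmetry $\sigma_p$ through the $\nabla^W$-exponential map, which is exactly local CR-symmetry of the Webster metric.

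The principal technical obstacle lies in the $(2) \Rightarrow (1)$ direction, specifically in the bookkeeping that relates $\nabla^W$ to the Levi-Civita connection via the Tanaka--Webster torsion, in establishing $\nabla^W h = 0$ from the $(\kk,\mu)$ hypothesis, and in confirming that the resulting pseudo-Hermitian curvature is $\nabla^W$-parallel; both the constancy of $\kk,\mu$ and the parallelism of $h$ are essential here. By contrast, the first direction reduces to a one-line application of Theorem~\ref{Bo-Ch} once one unpacks the action of a CR-symmetry on the naturally associated tensors.
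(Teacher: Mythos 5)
The paper does not actually prove this statement: Theorem~\ref{DileoL} is quoted verbatim from \cite[Theorem~3.2]{DileoLotta}, so there is no internal proof to compare against, and your proposal must be judged on its own. Its overall route (Boeckx--Cho's criterion for one direction, parallelism of the Tanaka--Webster data for the other) is the natural one, but there are two genuine gaps. In $(1)\Rightarrow(2)$, the sign computation giving $g((\nabla_X h)Y,Z)=-g((\nabla_X h)Y,Z)$ is correct, but Theorem~\ref{Bo-Ch} requires $M$ to be \emph{not $K$-contact}, whereas your hypothesis is only ``not Sasakian''; in dimension $\geqslant 5$ these are not equivalent, and if $h=0$ the vanishing of $g((\nabla_X h)Y,Z)$ is vacuous, so Boeckx--Cho yields nothing. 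You must first rule out the $K$-contact case, e.g.\ by applying the symmetry to the Riemannian curvature: since $d\sigma_p=-\id$ on $H_pM$ and $d\sigma_p\xi_p=\xi_p$, the vector $R(X,Y)\xi$ is fixed by $d\sigma_p$ for horizontal $X,Y$, hence lies in $\mathbb{R}\xi_p$, hence vanishes; combined with $K$-contactness this forces $R(X,Y)\xi=\eta(Y)X-\eta(X)Y$, i.e.\ the Sasakian condition, contradicting the hypothesis. Only after this may Theorem~\ref{Bo-Ch} be invoked.

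In $(2)\Rightarrow(1)$ the sketch hides the decisive step inside what you call bookkeeping. A Cartan--Ambrose--Hicks type argument for a connection with parallel torsion and curvature produces a local affine transformation with prescribed differential $F$ at $p$ \emph{only if} $F$ preserves the torsion and curvature tensors at $p$; parallelism alone is not sufficient. For the candidate $F=-\id$ on $H_pM$ and $F\xi_p=\xi_p$, the torsion condition is immediate, but preservation of $R^W$ imposes nontrivial constraints: for instance the mixed component $R^W(\xi,X)Y$ with $X,Y\in H_pM$ takes values in $H_pM$, so $F$ negates it while $R^W(F\xi,FX)FY=R^W(\xi,X)Y$ is unchanged, and one must check that such components vanish (and that the purely horizontal components are preserved). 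This is precisely where the explicit $(\kk,\mu)$ curvature of Theorem~\ref{(k,mu) curvature} and the relation between $\nabla^W$ and the Levi-Civita connection have to be used; without this verification the ``$\nabla^W$-exponential map'' does not produce a symmetry. Once it is done, the fact that the resulting affine map is a CR isometry does follow, as you say, from the $\nabla^W$-parallelism of $\varphi$, $g_{\eta}$ and $\eta$. So the architecture is sound, but both implications as written rely on steps that are not yet justified.
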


 \bigskip
 Non-Sasakian contact metric $(\kk,\mu)$-spaces have been completely classified by Boeckx in \cite{Boeckx}. In this case  $\kk<1$ and the real number 
 $$
 I_M:=\frac{1-\frac{\mu}{2}}{\sqrt{1-\kk}},
 $$
 is an invariant for the $(\kk,\mu)$ structure, that we call \emph{Boeckx invariant}. Indeed we have that:
 \begin{theorem}[\cite{Boeckx}]
 Let $(M_i, \varphi_i, \xi_i, \eta_i, g_i )$, $i=1, 2$, be two non-Sasakian $(\kk_i, \mu_i)$-spaces of the same dimension. Then $I_{M_1}=I_{M_2}$ if and only if, up to a $D$-homothetic
 transformation, the two spaces are locally isometric as contact metric spaces. In particular, if both spaces are simply connected and complete, they are globally isometric up to a $D$-homothetic transformation.
 \end{theorem}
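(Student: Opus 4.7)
The proof splits naturally into showing that $I_M$ is a $D$-homothetic invariant and that the invariant $I_M$ is in fact a \emph{complete} invariant. First I would verify the easy direction: under a $D_a$-homothetic deformation the transformed parameters $\bar\kk=(\kk+a^2-1)/a^2$ and $\bar\mu=(\mu+2a-2)/a$ satisfy $1-\bar\kk=(1-\kk)/a^2$ and $1-\bar\mu/2=(1-\mu/2)/a$, so
\[
 I_{\bar M}=\frac{1-\bar\mu/2}{\sqrt{1-\bar\kk}}=\frac{(1-\mu/2)/a}{\sqrt{1-\kk}/a}=I_M.
\]
This shows that whenever two $(\kk,\mu)$-spaces are related (up to local isometry) by a $D$-homothetic transformation, they share the same Boeckx invariant.

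For the converse, the plan is to first reduce to a common pair $(\kk,\mu)$. Given two non-Sasakian $(\kk_i,\mu_i)$-spaces with $I_{M_1}=I_{M_2}=:I$, a direct inspection of the system above shows that the map $a\mapsto(\bar\kk(a),\bar\mu(a))$ parametrizes, for $a>0$, the fiber over $I$ of the invariant, so I can choose $a_i>0$ making the $D_{a_i}$-deformed structures share a common pair $(\kk_0,\mu_0)$. After this reduction I may assume $\kk_1=\kk_2=\kk_0$ and $\mu_1=\mu_2=\mu_0$.

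Next I would exploit Theorem~\ref{(k,mu) curvature} and Theorem~\ref{th (k,mu) Blair}: since $\kk_0<1$, on each $M_i$ the operator $h_i$ has the same eigenvalues $0,\pm\lambda$ with $\lambda=\sqrt{1-\kk_0}$, and the full Riemann tensor is given by a universal algebraic expression in $\varphi_i,\xi_i,\eta_i,g_i,h_i$ depending only on $(\kk_0,\mu_0)$. Consequently, at any pair of points $p_i\in M_i$ I can construct a linear isometry $F\colon T_{p_1}M_1\to T_{p_2}M_2$ that intertwines all the structure tensors (matching the eigenspaces of $h_1$ and $h_2$ and the action of $\varphi_i$) and automatically satisfies $F^*R_{p_2}=R_{p_1}$. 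The local isometry statement then follows by invoking Boeckx's theorem from \cite{Boeckx locally phi-symmetric} that every non-Sasakian contact metric $(\kk,\mu)$-space is strongly locally $\varphi$-symmetric: such manifolds admit a Cartan--Ambrose--Hicks type rigidity theorem, whereby any linear isomorphism of tangent spaces preserving the contact metric structure tensors and the curvature tensor extends to a local contact metric isometry. For the global statement, strong local $\varphi$-symmetry together with completeness and simple connectedness allows the standard monodromy/developing argument to promote the local isometry to a global one.

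The main obstacle is Step three, where one must know that the $0$-jet datum (the contact metric structure tensors together with $R$) suffices to determine the manifold up to local isometry. Theorem~\ref{(k,mu) curvature} guarantees this datum coincides on the two spaces, but the extension from pointwise algebraic data to an actual local isometry relies on the Cartan--Ambrose--Hicks type statement for strongly locally $\varphi$-symmetric contact metric manifolds, which in turn uses that $\nabla^k R$ is determined by $R$ itself through the $\varphi$-symmetric condition. Once this rigidity is in place, the rest of the argument is a routine assembly of $D$-homothetic rescaling and standard completeness-plus-simple-connectedness extension.
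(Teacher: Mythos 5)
The paper does not prove this statement at all: it is quoted as a known result from \cite{Boeckx}, so the only meaningful comparison is with Boeckx's original argument, and your outline essentially reproduces it. The invariance computation ($1-\bar\kk=(1-\kk)/a^2$, $1-\bar\mu/2=(1-\mu/2)/a$) and the reduction of both spaces to a common pair $(\kk_0,\mu_0)$ by suitable $D_{a_i}$-deformations are correct, and the idea of matching tangent spaces by a linear isometry that intertwines the eigenspace decompositions of $h_i$ and the tensors $\varphi_i,\xi_i$, and then invoking rigidity of locally homogeneous (strongly locally $\varphi$-symmetric) spaces, is exactly the right strategy.

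The one genuine soft spot is the rigidity step, which you correctly identify as the crux but then close with a theorem that does not exist in the form you state it. There is no off-the-shelf ``Cartan--Ambrose--Hicks theorem for strongly locally $\varphi$-symmetric spaces'' saying that agreement of the structure tensors and of $R$ at a single point yields a local isometry; what is available for locally homogeneous spaces (and local homogeneity is guaranteed by \cite{Boeckx locally phi-symmetric}) is the Singer--Nomizu type statement requiring the linear isometry to intertwine $\nabla^kR$ for all $k$ (or up to the Singer index plus one). Your remark that ``$\nabla^kR$ is determined by $R$ itself through the $\varphi$-symmetric condition'' is not the needed ingredient. What actually closes the induction is that on a non-Sasakian $(\kk,\mu)$-space the covariant derivatives $\nabla\xi=-\varphi-\varphi h$, $\nabla\varphi$ and $\nabla h$ are universal algebraic expressions in $(\varphi,\xi,\eta,g,h)$ with coefficients depending only on $(\kk_0,\mu_0)$ (see \cite{Blair K P}); combined with Theorem~\ref{(k,mu) curvature} this shows every $\nabla^kR$ is such a universal expression, hence is automatically preserved by your structure-matching map $F$. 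One must finally check that the resulting Riemannian local isometry is a \emph{contact metric} isometry, i.e.\ carries $(\varphi_1,\xi_1,\eta_1)$ to $(\varphi_2,\xi_2,\eta_2)$; this requires an extra argument (the structure tensors must be recovered from $g$ and $R$, or the isometry must be built directly within the transitive automorphism group). These are gaps of detail rather than of strategy, but as written the proof is not complete at its decisive step.
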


\medskip
Next we recall the notions of straight and twisted complexifications of a Lie Triple System (LTS). For more details we refer the
reader to \cite{Bertram} and \cite{Bertram2}. Given a Lie triple system $(\mathfrak{m},[\,,\,,\,])$ we shall write as usual
$$R(X,Y)Z:=-[X,Y,Z].$$
 We shall also write $(\mathfrak{m},R)$ instead of $(\mathfrak{m},[\,,\,,\,])$.
 An \emph{invariant complex structure} on $\mathfrak{m}$ is a complex structure $J:\mathfrak{m}\to\mathfrak{m}$ such that for every $X,Y,Z\in \mathfrak{m}:$
 $$
 [X,Y,JZ]=J[X,Y,Z].
 $$

An \emph{invariant para-complex structure} $I$ on $\mathfrak{m}$ is a para-complex structure on $\mathfrak{m}$ (i.e., an endomorphism of $\mathfrak{m}$ such that $I^2=\id_{\mathfrak{m}}$ and the $\pm 1$ eigenspaces of $I$ have the same dimension) satisfying: 
$$
[X,Y,IZ]=I[X,Y,Z],
$$
for every $X,Y,Z\in \mathfrak{m}$.

For every LTS $\mathfrak{m}$ endowed with an invariant (para-)complex structure, the corresponding simply connected symmetric space $G/H$ is canonically endowed with a $G$-invariant almost (para-)complex structure and viceversa (see Proposition III.1.4 of \cite{Bertram}). 
%

 An invariant (para-)complex structure $J$ on a LTS $(\mathfrak{m},[\,,\,,\,])$ is called \emph{straight} or \emph{twisted} respectively if:
 \begin{equation*}
  [JX,Y,Z]=[X,JY,Z]
 \end{equation*}
 or
\begin{equation*}
  [JX,Y,Z]=-[X,JY,Z].
 \end{equation*}

Accordingly, a \emph{straight} or respectively \emph{twisted} (para-)complex symmetric space is an affine symmetric space $M=G/H$ endowed with an invariant almost (para-) complex structure $\mathcal{J}$ such that 
$$R(\mathcal{J}X,Y)Z=R(X,\mathcal{J}Y)Z,$$ 
or respectively
$$R(\mathcal{J}X,Y)Z=-R(X,\mathcal{J}Y)Z,$$
where $R$ is the curvature of $M$. 

\medskip
 A \emph{(para-)complexification} of a LTS $\mathfrak{m}$ is a LTS $(\mathfrak{q},[\,,\,,\,])$ together with an invariant (para-)complex structure $J$ and an automorphism $\tau$ such that $\tau J+J\tau=0$, $\tau^2=\id_{\mathfrak{q}}$ and the LTS $\mathfrak{q}^{\tau}$ given by the 
 space of $\tau$-fixed points of $\mathfrak{q}$ is isomorphic to $\mathfrak{m}$. The (para-)complexification $(\mathfrak{q},[\,,\,,\,],J,\tau)$ of $\mathfrak{m}$ is called \emph{straight} or \emph{twisted} respectively if $J$ is a straight or twisted.

%

\bigskip

We recall that every LTS $(\mathfrak{m},R)$ has a unique straight complexification given by the $\mathbb{C}$--trilinear extension
$R_{\mathbb{C}}:\mathfrak{m}_{\mathbb{C}}\times \mathfrak{m}_{\mathbb{C}}\times \mathfrak{m}_{\mathbb{C}} \rightarrow 
\mathfrak{m}_{\mathbb{C}}
$ of $R$ \cite[Proposition~2.1.4]{Bertram2}.
The existence of a twisted complexification or para-complexification of  $\mathfrak{m}$ is instead related to the existence of a particular $(1,3)$-tensor, the \emph{Jordan extension} of $R$.

Let $M=G/H$ be a symmetric space endowed with an invariant almost (para-) complex structure $\mathcal{J}$.
 The \emph{structure tensor} of $\mathcal{J}$ is the $(1,3)$-tensor
 \begin{equation*}
  T(X,Y)Z=-\frac{1}{2}\big( R(X,Y)Z-\mathcal{J}R(X,\mathcal{J}^{-1}Y)Z\big).
 \end{equation*}
 This tensor satisfies the following two properties:
 \begin{equation}\tag{JT1}\label{JT1}
  T(X,Y)Z=T(Z,Y)X
 \end{equation}
\begin{equation}\tag{JT2}\label{JT2}
 \begin{aligned}
  T(U,V)T(X,Y,Z)=&T(T(U,V)X,Y,Z)-T(X,T(U,V)Y,Z)\\
                 &+T(X,Y,T(U,V)Z),
 \end{aligned}
\end{equation}
Now, a \emph{Jordan triple system} is a pair $(V,T)$, where $V$ is a vector space and $T:V\times V \times V\rightarrow V$ is a trilinear map satisfying \eqref{JT1}, \eqref{JT2},  called a \emph{Jordan triple product} on $V$.

Observe that if $T$ is a JT product on $V$, then
$$
[x,y,z]:=T(x,y)z-T(y,x)z
$$
is a LT product on $V$. 

 Let $T$ be a JT product on a LTS $(\mathfrak{m}, R)$. We set
 $$R_T(x,y):=-T(x,y)+T(y,x).$$
  $T$ is said to be a \emph{Jordan extension} of $R$ if $R=R_T$.

\begin{theorem}\cite[Theorem~III.4.4]{Bertram}\label{Thjor}
 Let $(\mathfrak{m},R)$ be a LTS. The following object are in one-to-one correspondence:
 \begin{enumerate}[(1)]
  \item twisted complexification of $R$,
  \item twisted para-complexisication of $R$,
  \item Jordan extension of $R$.
 \end{enumerate}
\end{theorem}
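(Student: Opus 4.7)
The plan is to establish a three-way bijection by giving explicit constructions in each direction; the key technical tool is the structure tensor formula recalled in the excerpt, which was designed precisely to turn an invariant (para-)complex LTS into a JTS.

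I would treat the implications (1) $\Rightarrow$ (3) and (2) $\Rightarrow$ (3) in parallel. Given a twisted complexification or para-complexification $(\mathfrak{q},[\,,\,,\,],J,\tau)$ of $(\mathfrak{m},R)$, set $R_\mathfrak{q}(X,Y)Z := -[X,Y,Z]$ and define
\begin{equation*}
T_\mathfrak{q}(X,Y)Z := -\tfrac{1}{2}\bigl(R_\mathfrak{q}(X,Y)Z - J\,R_\mathfrak{q}(X,J^{-1}Y)Z\bigr).
\end{equation*}
The $J$-invariance of $R_\mathfrak{q}$ together with the twisted identity $R_\mathfrak{q}(JX,Y) = -R_\mathfrak{q}(X,JY)$ are then used to verify \eqref{JT1} and \eqref{JT2}, so $T_\mathfrak{q}$ is a Jordan triple product on $\mathfrak{q}$. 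Because $\tau$ is an LTS-automorphism anticommuting with $J$, it preserves $T_\mathfrak{q}$, so $T := T_\mathfrak{q}|_\mathfrak{m}$ takes values in $\mathfrak{m}$ and defines a JT product there. A direct computation using the skew-symmetry of $R_\mathfrak{q}$ in its first two entries together with the identity $R_\mathfrak{q}(J^{-1}X,Y) = -R_\mathfrak{q}(X,J^{-1}Y)$ (itself derived from the twisted condition) gives $R_T = R$; thus $T$ is a Jordan extension of $R$.

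For the reverse implications (3) $\Rightarrow$ (1) and (3) $\Rightarrow$ (2), I would use a common doubling construction. Starting from a Jordan extension $T$ of $R$, set $\mathfrak{q} := \mathfrak{m} \oplus \mathfrak{m}$ and equip it, in the complex case, with $J(x,y) := (-y,x)$ and $\tau(x,y) := (x,-y)$; in the para-complex case, with $I(x,y) := (x,-y)$ and $\tau(x,y) := (y,x)$. In both cases the relations $J^2 = -\id$ (resp.\ $I^2 = \id$), $\tau^2 = \id$, $\tau J + J\tau = 0$ (resp.\ $\tau I + I\tau = 0$), and $\mathfrak{q}^\tau \cong \mathfrak{m}$ are immediate. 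The triple bracket on $\mathfrak{q}$ is then defined by an explicit polynomial formula in $T$, uniquely determined by the requirements that it restrict to $R$ on $\mathfrak{q}^\tau$, be invariant under $J$ (resp.\ $I$), and satisfy the twisted sign rule. The Jordan axioms for $T$ translate into the LTS axioms for this bracket, and the two constructions are verified to be mutually inverse by comparing their action on $\mathfrak{m}$.

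The step I expect to be the main obstacle is the verification that $T_\mathfrak{q}$ satisfies \eqref{JT2}: this requires carefully combining the LTS derivation identity for $R_\mathfrak{q}$ with $J$-invariance and the twisted sign rule. It is precisely the place where the \emph{twisted} hypothesis is essential---in the straight case the same manipulation fails to produce a Jordan identity, which is why straight complexifications fall outside this correspondence and are instead captured by the canonical $\mathbb{C}$-trilinear extension mentioned in the excerpt.
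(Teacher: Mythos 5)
First, a point of order: the paper does not prove this statement at all --- it is quoted verbatim from Bertram \cite[Theorem~III.4.4]{Bertram} and used as a black box in the proof of Theorem~\ref{base spaces}. So there is no in-paper proof to compare against; your proposal has to be judged against Bertram's argument, whose overall architecture (structure tensor in one direction, a polarized doubling of the JTS in the other) your outline does reproduce correctly. Several individual steps you sketch do go through: $\tau$-equivariance of $T_\mathfrak{q}$ (so that $T_\mathfrak{q}$ restricts to $\mathfrak{m}=\mathfrak{q}^\tau$), the identity $R_T=R$ (a short computation using $R_\mathfrak{q}(y,J^{-1}x)=R_\mathfrak{q}(x,J^{-1}y)$, which follows from the twisted rule), and \eqref{JT1} (Bianchi plus $J$-invariance). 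The formal properties of your doubled space ($J^2=-\id$, $\tau J+J\tau=0$, $\mathfrak{q}^\tau\cong\mathfrak{m}$) are also fine.

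The genuine gaps are exactly the two load-bearing verifications, which you flag but do not perform. (i) \eqref{JT2} for $T_\mathfrak{q}$: you correctly identify this as the main obstacle, but identifying it is not the same as overcoming it; without that computation the implication $(1)\Rightarrow(3)$ is unproved. (ii) The reverse direction: ``an explicit polynomial formula in $T$, uniquely determined by the requirements\dots'' is not a construction. The uniqueness claim is defensible --- from $T_\mathfrak{q}(X,Y)Z=-\tfrac12(R_\mathfrak{q}(X,Y)Z-JR_\mathfrak{q}(X,J^{-1}Y)Z)$ one can solve for all brackets containing an odd number of $J$'s in terms of $R$ and $T$ --- but existence is the whole theorem: you must write the bracket on $\mathfrak{m}\oplus\mathfrak{m}$ down and verify the LTS axioms (skew-symmetry, the Jacobi-type identity, and the derivation identity) from \eqref{JT1} and \eqref{JT2}, and this is where the Jordan identity is actually consumed. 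Finally, ``verified to be mutually inverse by comparing their action on $\mathfrak{m}$'' is too weak for a one-to-one correspondence: for the round trip $(1)\to(3)\to(1)$ you need an LTS isomorphism of the reconstructed $\mathfrak{m}\oplus\mathfrak{m}$ with the original $\mathfrak{q}$ intertwining $J$ and $\tau$ (the natural candidate is $(x,y)\mapsto x+Jy$), and checking that it respects the triple brackets again requires the explicit formula you omitted. As it stands, the submission is a correct roadmap of Bertram's proof rather than a proof.
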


\bigskip

In the next section we shall be concerned with the following basic examples, studying their interplay
with the classification of contact metric $(\kk,\mu)$-manifolds.
Consider the Lie triple systems $(\mathbb{R}^n,R)$ and $(\mathbb{R}^n,-R)$, associated respectively to the sphere 
$S^n$ and the hyperbolic space $\mathbb{H}^n$, where $R$ is
$$
R(x,y)z:=<y,z>x-<x,z>y.
$$

On $(\mathbb{R}^n,R)$ one can consider the following JT product:
$$
T(x,y)z=<x,z>y-<x,y>z-<y,z>x.
$$

Then, according to  \cite[Proposition~IV.1.5]{Bertram},
the corresponding twisted complexification and para-complexification of $S^n$, 
are the the symmetric spaces
$$SO(n+2)/(SO(n)\times SO(2))$$
and 
$$SO(n+1,1)/(SO(n)\times SO(1,1)).$$

\medskip
In the case of $\mathbb{H}^n$, one can consider $-T$; 
the corresponding twisted complexification is  (see \cite[ p.~91]{Bertram}):
$$SO(n,2)/(SO(n)\times SO(2)).$$

\section{A characterization of contact metric $(\kappa,\mu)$-spaces}\label{km}

%

%

\bigskip
Let $(M,\varphi,\xi,\eta,g)$ be a connected homogeneous contact metric manifold. Consider a Lie group $G$ acting transitively on $M$ as a group of automorphisms of the contact metric structure, and denote by $H$ the isotropy subgroup of $G$ at $x_o\in M$. The natural map $j:G/H\rightarrow M$ given by $j(aH)=ax_o$ is a diffeomorphism. Thus $G/H$ is a homogeneous Riemannian space and in particular it is a reductive homogeneous space (cf. e.g. \cite{TricerriVan}). Fix a reductive decomposition of the Lie algebra $\mathfrak{g}$ of $G$:
\begin{equation}\label{h+m}
 \mathfrak{g}=\mathfrak{h}\oplus \mathfrak{m},
\end{equation}
where $\mathfrak{h}=Lie(H)$.
The identity component $G^o$ of $G$ acts again transitively on $M$, and the isotropy subgroup of $G^o$ at $x_o$ is $H\cap G^o $. Let
$$
\pi:G^o\rightarrow G^o / H\cap G^o \simeq M
$$ 
be the natural fibration of $G^o$ onto the homogeneous space $G^o / H\cap G^o$. 
Being $Lie(H)=Lie(H\cap G^o)$, \eqref{h+m} is also a reductive decomposition for $G^o / H\cap G^o$.
Then $\mathfrak{m}$ decomposes into the direct sum of two $H\cap G^o$-invariant subspaces
$$
\mathfrak{m}=\mathbb{R}J\oplus \mathfrak{b},
$$
where $J$ is the vector of $\mathfrak{m}$ corresponding to $\xi_{\underline{o}}$ and $\mathfrak{b}$ corresponds to the determination of the contact distribution $D=\ker(\eta)$ at $\underline{o}:=\pi(e)\cong x_o$, being $e$ the neutral element of $G$.

Now, homogeneity ensures that the contact form $\eta$ is regular (see \cite[Section~II]{BoothWang}); hence we have a canonical fibration of $M$,
given by (see also \cite[p. 225]{Musso}):
\begin{equation*}
  G^o/H\cap G^o \rightarrow G^o/S^o(H\cap G^o),
\end{equation*}
where $S^o$ is the identity component of the closed Lie subgroup 
\begin{equation*}
 S:=\{ h\in G^o \;|\; Ad(h)^*\tilde{\eta}=\tilde{\eta} \}
\end{equation*}
 of $G^o$. Here  $\tilde{\eta}$ denotes the one form on $G^o$ pull back of $\eta$ via $\pi$. We have that $H\cap G^o\subset S$ (\cite[Lemma~II.4]{BoothWang}).
 
Moreover, the Lie algebra $\bar{\mathfrak{h}}$ of $\bar{H}:=S^o(H\cap G^o)$ is given by:
$$
\bar{\mathfrak{h}}=\mathfrak{h}\oplus \mathbb{R}J,
$$
and we have the following decomposition of $\mathfrak{g}$:

\begin{equation}\label{h+b}
\mathfrak{g}=\bar{\mathfrak{h}}\oplus \mathfrak{b}.
\end{equation}

\bigskip


Our first aim is to characterize the non-Sasakian contact metric $(\kappa,\mu)$-spaces as homogeneous contact metric manifolds for which decomposition \eqref{h+b} is \emph{symmetric}, i.e.,  
$$[\bar{\mathfrak{h}},\bar{\mathfrak{h}}] \subset \bar{\mathfrak{h}},\,
[\bar{\mathfrak{h}},\mathfrak{b}] \subset \mathfrak{b},\,[\mathfrak{b},\mathfrak{b}]\subset \bar{\mathfrak{h}}.$$

Using this, in Corollary~\ref{corb}, we shall be able to endow $B$ of $G^o$-invariant affine connections making it an affine symmetric space.

\bigskip

\begin{theorem}\label{characterization of (k,mu)}
Let $(M,\varphi, \xi,\eta,g)$ be a  simply connected, complete, contact metric manifold. Assume $M$ is not  $K$-contact.
Then the following conditions are equivalent:
\begin{enumerate} [(a)]
 \item $M$ is a contact metric $(\kappa,\mu)$-space.
 \item $M$ admits a transitive, effective Lie group of automorphisms $G$
 whose Lie algebra $\mathfrak{g}$ is a symmetric Lie algebra with symmetric decomposition \eqref{h+b}.\label{homogg}
\end{enumerate}

\end{theorem}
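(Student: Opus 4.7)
The plan is to exploit the Dileo--Lotta characterization (Theorem~\ref{DileoL}), which identifies the $(\kappa,\mu)$-condition with local $CR$-symmetry of the Webster metric, and to read condition (b) as the Lie-algebraic shadow of a $CR$-symmetry at $x_o$. Under simple connectedness and completeness, local $CR$-symmetry globalizes in the sense of Kaup--Zaitsev, so one may pass back and forth between $CR$-symmetries at $x_o$ and involutions of $\mathfrak{g}$ with the prescribed eigenspace decomposition.

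For (a)$\Rightarrow$(b), I would take $G$ to be the identity component of the group of pseudo-Hermitian automorphisms of $M$. Every such automorphism preserves $\eta$, $\xi$, $g$ and hence $\varphi$, so $G$ acts effectively on $M$ by automorphisms of the contact metric structure; the global $CR$-symmetric structure makes the action transitive. Let $s$ denote the $CR$-symmetry at $x_o$: it is an involutive pseudo-Hermitian automorphism with $s(x_o)=x_o$, $ds_{x_o}(\xi_{x_o})=\xi_{x_o}$ (forced by $s^*\eta=\eta$ together with $s^*g=g$) and $ds_{x_o}|_{D_{x_o}}=-\id$. Inner conjugation $\Sigma:=\operatorname{Ad}(s)$ is then an involutive automorphism of $G$, and its differential $\sigma$ on $\mathfrak{g}$ is a Lie algebra involution whose $+1$-eigenspace contains $\mathfrak{h}$ (since $s$ fixes $x_o$) and the element $J$ (since $ds_{x_o}$ fixes $\xi_{x_o}$), hence equals $\bar{\mathfrak{h}}$, while the $-1$-eigenspace is forced to be $\mathfrak{b}$. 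The three bracket relations defining a symmetric decomposition then follow immediately from $\sigma$ being a Lie algebra involution with these eigenspaces.

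For (b)$\Rightarrow$(a), I would integrate the Lie algebra involution $\sigma$ associated to the symmetric decomposition to a group automorphism $\Sigma$ of the universal cover of $G^o$; since $\sigma$ fixes $\mathfrak{h}$ pointwise and the isotropy at $x_o$ is connected (by simple connectedness of $M$), $\Sigma$ descends to a diffeomorphism $\bar\sigma:M\to M$ fixing $x_o$, with $d\bar\sigma_{x_o}=\id$ on $\mathbb{R}\xi_{x_o}$ and $d\bar\sigma_{x_o}=-\id$ on $D_{x_o}$ read off from the decomposition. To upgrade $\bar\sigma$ to an automorphism of the contact metric structure I would use the twisted equivariance $\bar\sigma\circ L_g=L_{\Sigma(g)}\circ\bar\sigma$: since $\varphi,\xi,\eta,g$ are $G$-invariant, so are their $\bar\sigma$-pullbacks, and these pullbacks are therefore determined by their values at $x_o$; a short direct check there, using $\xi_{x_o}\perp D_{x_o}$ and $\varphi\xi=0$, shows these values coincide with the originals. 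Thus $\bar\sigma$ is a $CR$-symmetry at $x_o$, and by transitivity of $G$ one obtains $CR$-symmetries at every point, so Theorem~\ref{DileoL} delivers the $(\kappa,\mu)$-condition.

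The main obstacle is the bridge in (b)$\Rightarrow$(a) from the purely Lie-algebraic symmetric decomposition to contact geometry, namely showing that $\bar\sigma$ genuinely preserves all the structural tensors and not merely the splitting $T_{x_o}M=\mathbb{R}\xi_{x_o}\oplus D_{x_o}$ at the base point; the $G$-invariance argument above is the key. Secondary technical points are integrating $\sigma$ globally (handled by passing to the universal cover of $G^o$) and controlling the descent of $\Sigma$ to $M$ (handled by connectedness of the isotropy, which in turn uses simple connectedness of $M$).
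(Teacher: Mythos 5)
Your overall architecture is sound but genuinely different from the paper's on both implications. For (a)$\Rightarrow$(b) the paper does not conjugate by the symmetry inside the automorphism group: it imports homogeneity from Boeckx's theorem, fixes an arbitrary reductive decomposition, and tests the torsion $\tilde{T}$ and curvature $\tilde{R}$ of the associated canonical connection against $\sigma_{\star}$ to read off the bracket relations $[\mathfrak{b},\mathfrak{b}]\subset\bar{\mathfrak{h}}$, $[J,X]_{\mathfrak{h}}=0$, $[J,X]\in\mathfrak{b}$ directly for the pre-chosen $\mathfrak{b}$. For (b)$\Rightarrow$(a) the paper never constructs a symmetry at all: it sets $A=\nabla-\tilde{\nabla}$, uses $\tilde{\nabla}h=0$ and the fact that the symmetric hypothesis forces $\tilde{T}(X,Y)\in\mathbb{R}J$ for $X,Y\in\mathfrak{b}$ to obtain $g((\nabla_Xh)Y,Z)=0$ for horizontal $X,Y,Z$, and then invokes the Boeckx--Cho criterion (Theorem~\ref{Bo-Ch}). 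Your route is more symmetric between the two directions and conceptually attractive (the symmetric decomposition as the linearization of the $CR$-symmetry), but it leans on Theorem~\ref{DileoL} in both directions, whereas the paper only needs it in the easy direction.

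Two points need repair. First, in (a)$\Rightarrow$(b) the $-1$-eigenspace of $d(\operatorname{Ad}(s))$ is not ``forced to be $\mathfrak{b}$'': what the conditions $ds_{x_o}|_{D_{x_o}}=-\id$ and $ds_{x_o}\xi_{x_o}=\xi_{x_o}$ actually force is $\sigma(X)+X\in\mathfrak{h}$ for $X\in\mathfrak{b}$ and $\sigma(J)-J\in\mathfrak{h}$, whence the $+1$-eigenspace equals $\bar{\mathfrak{h}}$ but the $-1$-eigenspace is only some $\operatorname{ad}(\bar{\mathfrak{h}})$-invariant complement projecting onto $D_{x_o}$; you must either re-choose the reductive complement $\mathfrak{m}$ accordingly (which the formulation of \eqref{h+b} permits, but say so) or argue for the given $\mathfrak{b}$ as the paper does. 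You also use without justification that global $CR$-symmetry yields transitivity; the paper cites Boeckx for homogeneity instead. Second, and more substantively, in (b)$\Rightarrow$(a) you apply Theorem~\ref{DileoL} in the direction ``locally $CR$-symmetric $\Rightarrow(\kappa,\mu)$'', but that theorem is stated for pseudo-Hermitian manifolds, i.e.\ with integrable $CR$ structure, and for a contact metric manifold satisfying only (b) this integrability is not known in advance and you do not verify it. The clean fix is to bypass Theorem~\ref{DileoL} entirely: once you have established that $\bar\sigma$ is an automorphism of the contact metric structure fixing $x_o$ with $d\bar\sigma_{x_o}=-\id$ on $D_{x_o}$, it preserves $h$ and $\nabla$, so $\nabla h$ is $\bar\sigma$-invariant and hence $g((\nabla_Xh)Y,Z)$ changes sign, i.e.\ vanishes, for horizontal $X,Y,Z$ at $x_o$; transitivity (a hypothesis of (b)) propagates this to all points and Theorem~\ref{Bo-Ch} concludes, which is in substance what the paper does.
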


\begin{proof}
(a) $\Rightarrow$ (b).
According to \cite{Boeckx locally phi-symmetric}, $(M,\varphi,\xi,\eta,g)$ is a homogeneous contact metric manifold.
Let $G=Aut(M)$ be the Lie group of all the automorphisms of the contact metric structure of $M$, and $H$ be the isotropy subgroup of $G$ at $x_o\in M$. 

We fix a reductive decomposition of $\mathfrak{g}$
\begin{equation}\label{reductive dec}
\mathfrak{g}=\mathfrak{h}\oplus \mathfrak{m},
\end{equation}
where $\mathfrak{g}$ and $\mathfrak{h}$ are respectively the Lie algebras of $G$ and $H$. Keeping the notation above we consider also the decompositions
$$
\mathfrak{g}=\mathfrak{h}\oplus\mathbb{R}J\oplus \mathfrak{b}=\bar{\mathfrak{h}}\oplus \mathfrak{b}.
$$ 

By Theorem \ref{DileoL}, for every $x\in M$ there exists a local $CR$-symmetry at $x$. Being $M$ simply connected and complete, the local $CR$-symmetries are actually globally defined.
Let $\sigma$ be the $CR$-symmetry at $\underline{o}=eH$.  We recall that $\sigma$ is an isometric $CR$ diffeomorphism of $M$, whose
differential at $\underline{o}$ is $-\mathrm{Id}$ on ${D}_{\underline{o}}$.
In particular, it is a automorphism of the contact metric structure and and  affine automorphism of the canonical $G$-invariant affine connection $\tilde{\nabla}$ associated to \eqref{reductive dec}. 
Hence, denoting by $\tilde{T}$ the torsion of $\tilde{\nabla}$, we have that, for every $X,Y,Z\in \mathfrak{b}\subset \mathfrak{m}$:
\begin{equation*}
\begin{aligned}
 g_{\underline{o}}(\tilde{T}(X,Y),Z)&= g_{\underline{o}}(\sigma_{\star}\tilde{T}(X,Y),\sigma_{\star}Z)\\
                             & =g_{\underline{o}}(\tilde{T}(\sigma_{\star}X,\sigma_{\star}Y),\sigma_{\star}Z)\\
                             &= - g_{\underline{o}}(\tilde{T}(X,Y),Z),
\end{aligned}
\end{equation*}
which yields that $[X,Y]_{\mathfrak{m}}=-\tilde{T}_{\underline{o}}(X,Y) \in \mathbb{R}$, and hence $[\mathfrak{b},\mathfrak{b}]\subset \bar{\mathfrak{h}}$.
\medskip

The curvature tensor $\tilde{R}$ of $\tilde{\nabla}$ and the Reeb vector field $\xi$ are also preserved by $\sigma$. Hence for every $X,Y,Z\in\mathfrak{b}$:
\begin{equation*}
\begin{aligned}
 g_{\underline{o}}(\tilde{R}(J,X)Y,Z)&= g_{\underline{o}}(\sigma_{\star}\tilde{R}(J,X)Y,\sigma_{\star}Z)\\
                             & =g_{\underline{o}}(\tilde{R}(\sigma_{\star}J,\sigma_{\star}X)\sigma_{\star}Y,\sigma_{\star}Z)\\
                             &= - g_{\underline{o}}(\tilde{R}(J,X)Y,Z),
\end{aligned}
\end{equation*}
moreover, since $\tilde{\nabla}\mathcal{D} \subset \mathcal{D}$ we have that $\tilde{R}(J,X)Y\in \mathcal{D}_{\underline{o}};$ thus
$$
[[J,X]_{\mathfrak{h}},Y]=0,
$$
for every $X,Y\in \mathfrak{b}$. Being $G$ effective on $M$, the adjoint representation
$ad:\mathfrak{h}\rightarrow End({\mathfrak{m}})$
is injective; therefore, using also $[\mathfrak{h},J]=0$, we conclude that $[J,X]_{\mathfrak{h}}=0$.

Finally we prove that $[J,X]\in\mathfrak{b}$; indeed we have:
\begin{equation*}
\begin{aligned}
 g_{\underline{o}}(\tilde{T}(J,X),J)&= g_{\underline{o}}(\sigma_{\star}\tilde{T}(J,X),\sigma_{\star}J)\\
                             & =g_{\underline{o}}(\tilde{T}(\sigma_{\star}J,\sigma_{\star}X),\sigma_{\star}J)\\
                             &= - g_{\underline{o}}(\tilde{T}(J,X),J),
\end{aligned}
\end{equation*}
This completes the proof of \ref{homogg}.

\bigskip

(b) $\Rightarrow$ (a). 
Let $\mathfrak{g}=\mathfrak{h}\oplus \mathfrak{m}$ be a reductive decomposition for the homogeneous contact metric space $M=G/H$,  where $H$ is the isotropy subgroup of $G$ at a point $x_o\in M$.

Let $\nabla$ and $\tilde{\nabla}$ respectively the Levi-Civita connection of $g$ and the canonical affine connection on $M$ associated to the fixed reductive decomposition. If we set $A=\nabla -\tilde{\nabla}$, then
\begin{equation*}
 (\nabla_Xh)Y=-(\tilde{\nabla}_Xh)Y+A(X,hY)-hA(X,Y).
\end{equation*}
Now, since the tensor $h=\frac{1}{2}\mathcal{L}_{\xi}\varphi$ is invariant under automorphisms of the
 contact metric structure,  it is parallel with respect to the canonical connection $\tilde{\nabla}$ (\cite[p.~193]{KN}) and hence:
\begin{equation}\label{nablah}
 (\nabla_Xh)Y=A(X,hY)-hA(X,Y).
\end{equation}
Being $\tilde{\nabla}$ a metric connection, for $X,Y,Z\in \mathfrak{X}(M)$ we have that
\begin{equation}\label{g(A)}
 g(A(X,Y),Z)+g(Y,A(X,Z))=0.
\end{equation}
Then for every $X,Y,Z\in \mathfrak{X}(M)$:
\begin{equation}\label{gA}
 2g(A(X,Y),Z)=-g(\tilde{T}(X,Y),Z)+g(\tilde{T}(Y,Z),X)-g(\tilde{T}(Z,X),Y).
\end{equation}
Now observe that for every $X,Y\in \mathfrak{b}$:
$$
\tilde{T}_{\underline{o}}(X,Y)=-[X,Y]_{\mathfrak{m}},
$$
and  
$$
[X,Y]\in \mathfrak{h}\oplus \mathbb{R}J,
$$
being $\mathfrak{g}=\bar{\mathfrak{h}}\oplus \mathfrak{b}$ a symmetric decomposition by assumption. Thus $\tilde{T}_{\underline{o}}(X,Y)\in \mathbb{R}J$. Hence for every $X,Y,Z\in \mathcal{D}$:
$$
g(\tilde{T}(X,Y),Z)=0,
$$
and then, by \eqref{gA}
$$
g(A(X,Y),Z)=0.
$$ 
Thus, using \eqref{nablah}, we obtain that
$$
g((\nabla_Xh)Y,Z)=0,
$$
for every $X,Y,Z\in \mathcal{D}$. This implies that $M$ is a contact metric $(\kappa,\mu)$-space according to 
Theorem \ref{Bo-Ch}.
\end{proof}

\begin{corollary}\label{corb}
 Let $M=G/H$ be a simply connected, complete, non-Sasakian contact metric $(\kappa,\mu)$-manifold. 
 Then the base space $B=G^o/\bar{H}$ of the canonical fibration of $M$ is an an affine symmetric space. 
%
%
\end{corollary}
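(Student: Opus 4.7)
The strategy is to combine the symmetric decomposition $\mathfrak{g} = \bar{\mathfrak{h}} \oplus \mathfrak{b}$ furnished by Theorem~\ref{characterization of (k,mu)} with the general machinery of canonical connections on reductive homogeneous spaces. First I would note that $\bar{H}$ is a closed subgroup of $G^o$ (since $S$ is closed and $H\cap G^o$ is closed), so $B = G^o/\bar{H}$ is a genuine homogeneous manifold. The inclusion $[\bar{\mathfrak{h}}, \mathfrak{b}] \subset \mathfrak{b}$ guarantees that the isotropy representation of $\bar{H}^o$ preserves $\mathfrak{b}$, and one can handle the finitely many components of $\bar{H}$ by recalling that the isotropy at $\underline{o}$ already acts on the contact distribution $D$ preserving the splitting $\mathbb{R}J \oplus \mathfrak{b}$. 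Thus \eqref{h+b} is a reductive decomposition for $B$, and it determines a $G^o$-invariant affine connection $\hat{\nabla}$ on $B$, the canonical connection associated with \eqref{h+b}.

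The remaining inclusion $[\mathfrak{b},\mathfrak{b}] \subset \bar{\mathfrak{h}}$ is the decisive input. At the origin $\bar{o} = \pi_B(e)$, the torsion of $\hat{\nabla}$ reads $\hat{T}_{\bar{o}}(X,Y) = -[X,Y]_{\mathfrak{b}}$ for $X,Y \in \mathfrak{b} \cong T_{\bar{o}}B$, which vanishes identically by the symmetric property; $G^o$-invariance then gives $\hat{T}\equiv 0$. The curvature $\hat{R}$ is automatically $\hat{\nabla}$-parallel, as is standard for the canonical connection on any reductive homogeneous space. Therefore $(B,\hat{\nabla})$ is at least locally affinely symmetric.

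To upgrade to a global affine symmetric space, I would exhibit the central symmetry at $\bar{o}$ and translate it by $G^o$. The cleanest route is to use the global CR-symmetry $\sigma$ at $x_o$ already employed in the proof of Theorem~\ref{characterization of (k,mu)}: since $\sigma$ is an automorphism of the contact metric structure with $\sigma_* \xi = \xi$, it permutes the leaves of the Reeb foliation, hence descends through the canonical fibration to an involutive diffeomorphism $\bar{\sigma}$ of $B$ fixing $\bar{o}$, with $(\bar{\sigma})_{*,\bar{o}} = -\id$ on $T_{\bar{o}}B \cong \mathfrak{b}$ (because $d\pi_B$ identifies $D_{x_o}$ with $T_{\bar{o}}B$ and $\sigma_* = -\id$ on $D_{x_o}$). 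Equivalently, one can argue purely algebraically: the involution $\theta:\mathfrak{g}\to \mathfrak{g}$ with $\theta|_{\bar{\mathfrak{h}}}=\id$, $\theta|_{\mathfrak{b}}=-\id$ is a Lie algebra automorphism by virtue of \eqref{h+b} being symmetric, and its action at the base point is an affine involution of $\hat{\nabla}$ by the general correspondence between symmetric decompositions and affine symmetric spaces.

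The one subtle step I anticipate lies in the global descent: one must verify that $\bar{\sigma}$ is a well-defined diffeomorphism of $B$, not merely a local object. This hinges on the regularity of $\eta$ (ensuring that $B$ is Hausdorff and $\pi_B$ is a submersion with Reeb orbits as fibers) together with $\sigma_*\xi = \xi$, which forces $\sigma$ to act on the leaf space. Once this is in place, translating $\bar{\sigma}$ by $G^o$ produces an affine symmetry at every point of $B$, and $(B,\hat{\nabla})$ is an affine symmetric space.
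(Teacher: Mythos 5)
Your first half coincides with the paper's argument: both establish that \eqref{h+b} is a reductive decomposition for $B=G^o/\bar H$ (the paper checks $Ad(\bar H)\mathfrak{b}\subset\mathfrak{b}$ by writing $\bar H=S^o(H\cap G^o)$, using connectedness of $S^o$ together with $[\bar{\mathfrak{h}},\mathfrak{b}]\subset\mathfrak{b}$ for one factor and the $Ad(H\cap G^o)$-invariance of the splitting $\mathbb{R}J\oplus\mathfrak{b}$ for the other — essentially what you sketch, except that your claim that $\bar H$ has finitely many components is neither needed nor justified), and both then read off local affine symmetry of the canonical connection from $[\mathfrak{b},\mathfrak{b}]\subset\bar{\mathfrak{h}}$. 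Where you diverge is the passage from locally to globally symmetric. The paper does not construct the symmetry at all: it observes that $B$ is simply connected because the fibers of the Boothby--Wang fibration are connected, and that the canonical connection is complete, so the standard extension theorem upgrades local affine symmetries to global ones. You instead try to exhibit the symmetry at $\bar o$ directly, either by descending the global $CR$-symmetry $\sigma$ or via the algebraic involution $\theta$ of $\mathfrak{g}$.

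Both of your globalization routes have a gap, and it is not the one you flag. The descent of $\bar\sigma$ to the leaf space is indeed fine (regularity of $\eta$ plus $\sigma_*\xi=\xi$), but an involutive diffeomorphism fixing $\bar o$ with differential $-\mathrm{id}$ there is not automatically an \emph{affine} symmetry of $\hat\nabla$: you must check that $\bar\sigma$ preserves the canonical connection. This is true and provable — $\sigma\in H\subset G$, $Ad(\sigma)$ preserves $\mathfrak{h}$, $\mathfrak{m}$, and (since $Ad(\sigma)|_{\mathfrak{m}}=d\sigma_{x_o}$ under the identification $\mathfrak{m}\cong T_{x_o}M$) also $\mathbb{R}J$ and $\mathfrak{b}$, whence $\bar\sigma$ is affine for $\hat\nabla$ — but it is a step you cannot omit, and without it ``translating $\bar\sigma$ by $G^o$'' does not yield affine symmetries. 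Similarly, the purely algebraic alternative requires the Lie algebra involution $\theta$ to integrate to an involution of $G^o$ compatible with $\bar H$, which is not automatic when $G^o$ is not simply connected; ``the general correspondence'' gives you a local statement only. Either patch the affineness of $\bar\sigma$ as above, or adopt the paper's cleaner route: simple connectivity of $B$ (connected fibers) plus completeness of the canonical connection, which makes any construction of the symmetry unnecessary.
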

\begin{proof}

It suffices to prove that $B=G^o/\bar{H}$ is a homogeneous reductive space with respect to decomposition \eqref{h+b};
indeed, the associated canonical $G^o$-invariant connection makes $B$ a locally symmetric affine manifold. Observe that $B$ is simply connected since the fibers of the canonical fibration are connected
(cf. \cite[Theorem II.4]{BoothWang}). Being the canonical invariant connection always complete (see Corollary~2.5 of \cite[Chapter~X]{KN}), $B$ is actually a symmetric space.

To prove our claim,  we already recalled that
$H\cap G^o \subset S$; thus $S^o\subset S^o(H\cap G^o) \subset S$ and $Lie(S^o)=\bar{\mathfrak{h}}$. 
Since $[\bar{\mathfrak{h}},\mathfrak{b}]\subset \mathfrak{b}$ 
and $S^o$ is connected, it follows that $Ad(S^o)\mathfrak{b}\subset \mathfrak{b}$ 
and hence, being also $Ad(H\cap G^o)(\mathfrak{b})\subset \mathfrak{b}$, we conclude
that $Ad(\bar{H})\mathfrak{b}\subset \mathfrak{b}$ as claimed.

%
 
\end{proof}
%
%

We remark that the affine symmetric structure on $B$ thus obtained a priori
depends on the initial choice of a reductive decomposition \eqref{h+m} of $\mathfrak{g}$.
In the next section, we shall see that actually different choices lead to the
same affine symmetric space, up to isomorphism (see  Corollary~\ref{unicita}).

\section{The base space of the canonical fibration}

The aim of this section is to give a complete classification of the symmetric base spaces $B$
of the canonical fibrations of simply connected, complete, non-Sasakian contact metric $(\kappa,\mu)$-manifolds with Boeckx invariant $I_M\neq\pm1$. We obtain that $B$ is a twisted complexfication
or para-complexification of the sphere $S^n$, or of the hyperbolic space $\mathbb{H}^n$ according to the following table: 
\begin{table}[!h]
\caption{}
\label{tab:tableB}
\scalebox{0.9}{%
\begin{tabular}{| c | c| c|} 
\hline
Boeckx invariant &    Base space                        &   Type      \\ \hline
                 &                                      &\\
 $I_M>1$         &    $SO(n+2)/(SO(n)\times SO(2))$     &   Complexification of $S^n$\\ 
                 &                                      &          \\  \hline
                 &                                      &\\
 $-1<I_M<1$      &   $SO(n+1,1)/(SO(n)\times SO(1,1))$  &  Para-complexification of $S^n$\\ 
                 &                                      &   \\ \hline
                 &                                      & \\  
 $I_M<-1$        &  $SO(n,2)/(SO(n)\times SO(2))$     &     Complexification of $\mathbb{H}^n$  \\ 
                 &                                     &  \\ \hline     
\end{tabular}}
\end{table}

%

\vspace{2mm}

\bigskip

Keeping the notations above, we identify the tangent space of $B$ at the base point with the linear subspace $\mathfrak{b}\cong\mathcal{D}_o$.
Moreover we denote by $\mathfrak{b}_+$ and $\mathfrak{b}_-$ the subspaces of $\mathfrak{b}$ corresponding respectively to the eigenspaces $\mathcal{D}_o(\lambda)$ and $\mathcal{D}_o(-\lambda)$ of $h_o:\mathfrak{b}\to\mathfrak{b}$.

\medskip
We start by computing the curvature of $B$.

\begin{proposition}\label{base space curvature}
Let $(M,\varphi, \xi,\eta,g)$ be a simply connected, complete, non-Sasakian contact metric $(\kappa,\mu)$-manifold and $B$ the base space of the canonical fibration of $M$. If $\bar{\nabla}$ is the canonical affine connection on $B$ associated to any reductive decomposition of type \eqref{h+b}, then the curvature tensor $\bar{R}$ of $\bar{\nabla}$ at the base point $o\in B$ is given by

 \begin{equation}\label{R symmetric base}
  \begin{aligned}
   \bar{R}_o(X,Y)Z  =&  \Big((1-\frac{\mu}{2})g(Y,Z)+g(hY,Z)\Big)X\\
                   & -\Big((1-\frac{\mu}{2})g(X,Z)+g(hX,Z)\Big)Y\\
                   & +\Big(\frac{1-\frac{\mu}{2}}{1-\kappa}g(hY,Z)+g(Y,Z)\Big)hX\\
                   & -\Big(\frac{1-\frac{\mu}{2}}{1-\kappa}g(hX,Z)+g(X,Z)\Big)hY\\
                   & +\Big((1-\frac{\mu}{2})g(\varphi Y,Z)+g(\varphi hY,Z)\Big)\varphi X\\
                   & -\Big((1-\frac{\mu}{2})g(\varphi X,Z)+g(\varphi hX,Z)\Big)\varphi Y\\
                   & +\Big(\frac{1-\frac{\mu}{2}}{1-\kappa}g(\varphi hY,Z)+g(\varphi Y,Z)\Big)\varphi hX\\
                   & -\Big(\frac{1-\frac{\mu}{2}}{1-\kappa}g(\varphi hX,Z)+g(\varphi X,Z)\Big)\varphi hY\\
                   & +(\mu-2)g(\varphi X,Y)\varphi Z-2g(\varphi X,Y)\varphi hZ.
  \end{aligned}
 \end{equation}
\end{proposition}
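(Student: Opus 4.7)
The plan is to read off $\bar R_o$ from the Lie bracket structure of $\mathfrak{g}$ and then translate the result into contact metric tensors. By Theorem~\ref{characterization of (k,mu)} the decomposition $\mathfrak{g} = \bar{\mathfrak{h}} \oplus \mathfrak{b}$ is symmetric, so the standard formula for the curvature of the canonical connection of a reductive homogeneous space collapses, at the base point $o\in B$, to
\[
\bar R_o(X,Y)Z = -[[X,Y], Z], \qquad X, Y, Z \in \mathfrak{b}.
\]
Using $\bar{\mathfrak{h}} = \mathfrak{h} \oplus \mathbb{R}J$ I write $[X,Y] = [X,Y]_{\mathfrak{h}} + \beta(X,Y)\,J$ for a skew bilinear form $\beta$ on $\mathfrak{b}$, which yields
\[
\bar R_o(X,Y)Z = -[[X,Y]_{\mathfrak{h}}, Z] - \beta(X,Y)\,[J,Z].
\]
Thus it is enough to compute three model invariants: the action $[[X,Y]_{\mathfrak{h}}, \cdot]$ on $\mathfrak{b}$, the scalar $\beta$, and the endomorphism $[J,\cdot]$ of $\mathfrak{b}$.

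For the first, $-[[X,Y]_{\mathfrak{h}}, Z]$ is precisely the value at $\underline{o}$ of the curvature $\tilde R^M$ of the Kobayashi--Nomizu canonical connection $\tilde\nabla$ on $M$ attached to the reductive decomposition $\mathfrak{g} = \mathfrak{h} \oplus \mathfrak{m}$. Setting $A := \nabla - \tilde\nabla$ as in the proof of Theorem~\ref{characterization of (k,mu)}, the difference tensor $A$ is determined by the canonical torsion $\tilde T^M$ through \eqref{gA}; since $\tilde\nabla$ is metric and all of $\varphi, \xi, \eta, h$ are $\tilde\nabla$-parallel, $\tilde R^M_{\underline{o}}$ can then be extracted from the explicit Levi--Civita curvature in Theorem~\ref{(k,mu) curvature} via the classical identity relating the curvatures of two connections that differ by a $(1,2)$-tensor.

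For the remaining pieces, both $\beta$ and $[J,\cdot]$ are $\bar H$-invariant, hence expressible in terms of $g, \varphi, h$ on $\mathfrak{b}$. The identity $\beta(X,Y)J = -\tilde T^M_{\underline{o}}(X,Y)$ together with $\tilde\nabla\eta = 0$ forces $\eta(\tilde T^M(X,Y)) = d\eta(X,Y) = g(X,\varphi Y)$, so $\beta$ is a universal multiple of $g(\varphi X, Y)$. Analogously $[J,Z] = -\tilde T^M_{\underline{o}}(\xi, Z)$, and the Tanaka--Webster-type identities available for $\tilde\nabla$ on a contact metric $(\kappa,\mu)$-space (which applies because $M$ is locally $CR$-symmetric by Theorem~\ref{DileoL}), combined with $\varphi h + h\varphi = 0$, pin down $[J,Z]$ explicitly as a linear combination of $\varphi Z$ and $\varphi h Z$.

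The hard part is the final substitution. Inserting the explicit $\tilde R^M_{\underline{o}}$, $\beta$, and $[J,\cdot]$ into the displayed decomposition of $\bar R_o$ and collecting terms produces a long sum of quadratic expressions in $g(\cdot,\cdot)$, $g(\varphi\cdot,\cdot)$, $g(h\cdot,\cdot)$, $g(\varphi h\cdot,\cdot)$; reproducing the precise coefficients $\tfrac{1-\mu/2}{1-\kappa}$ of \eqref{R symmetric base} requires a careful balance between contributions from $\tilde R^M_{\underline{o}}$ and from the correction $\beta(X,Y)[J,Z]$. A convenient consistency check is furnished by Theorem~\ref{th (k,mu) Blair}: restricting to the eigenspaces $\mathcal{D}(\pm\lambda)$ of $h$ diagonalises $h$, collapses most terms, and reduces the verification on each eigenblock to a transparent low-rank identity.
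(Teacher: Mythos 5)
Your proposal follows essentially the same route as the paper: write $\bar R_o(X,Y)Z=-[[X,Y]_{\mathfrak h},Z]-[[X,Y]_J,Z]$, identify the first term with the curvature of the canonical connection $\tilde\nabla$ of $M$ for $\mathfrak g=\mathfrak h\oplus\mathfrak m$, convert it to the Levi--Civita curvature of Theorem~\ref{(k,mu) curvature} via the $\tilde\nabla$-parallel difference tensor $A$ determined by the torsion, and compute the $\mathbb{R}J$-corrections from $\tilde T(X,Y)=2g(X,\varphi Y)\xi$ and $A(X,\xi)=\varphi X+\varphi hX$ (note the factor $2$, which your sketch leaves as an unspecified ``universal multiple''). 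This is exactly the paper's argument, so the proposal is correct in approach, with only the final explicit substitution left unexecuted.
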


\begin{proof}

For every $X,Y,Z\in \mathfrak{b}$ we have (see \cite[Chapter~X]{KN}):
%
\begin{equation*}
 \bar{R}_o(X,Y)Z=-[[X,Y]_{J}+[X,Y]_{\mathfrak{h}},Z],
\end{equation*}
and hence 
\begin{equation}\label{R_R}
 \bar{R}_o(X,Y)Z=\tilde{R}(X,Y)Z-[[X,Y]_{J},Z],
\end{equation}
where $[X,Y]_{J}$ and $[X,Y]_{\mathfrak{h}}$ are the components of $[X,Y]\in \mathfrak{g}=\mathfrak{h}\oplus\mathbb{R}J\oplus\mathfrak{b}$ respectively in $\mathbb{R}J$ and $\mathfrak{h}$; being $\tilde{R}$ the curvature tensor of the canonical connection of the homogeneous reductive space $M$ with reductive decomposition $\mathfrak{g}=\mathfrak{h}\oplus \mathfrak{m}$.

Let $\nabla$ be the Levi-Civita connection of $g$ and $R$ the curvature tensor of $\nabla$. If we set $A:=\tilde{\nabla}-\nabla$, then a standard computation yields:
\begin{equation*}
\begin{aligned}
 \tilde{R}(X,Y)Z = & R(X,Y)Z-A(X,A(Y,Z))+A(Y,A(X,Z))\\
                   &+A(\tilde{T}(X,Y),Z)+(\tilde{\nabla}_XA)(Y,Z)-(\tilde{\nabla}_YA)(X,Z),
 \end{aligned}
\end{equation*}
for every $X,Y,Z\in\mathfrak{X}(M)$.
Moreover, being $A$ a $G$-invariant tensor, we have that $A$ is parallel with respect to the canonical connection $\tilde{\nabla}$ 
and hence
\begin{equation*}
 \tilde{R}(X,Y)Z =  R(X,Y)Z-A(X,A(Y,Z))+A(Y,A(X,Z))+A(\tilde{T}(X,Y),Z),
\end{equation*}
and equation \eqref{R_R} becomes:
\begin{equation*}
\begin{aligned}
 \bar{R}_o(X,Y)Z=& R(X,Y)Z-A(X,A(Y,Z))+A(Y,A(X,Z))\\
               & +A(\tilde{T}(X,Y),Z)-[[X,Y]_{J},Z].
\end{aligned}               
\end{equation*}
We already observed in the proof of Theorem~\ref{characterization of (k,mu)} that for every $X,Y,Z\in \mathcal{D}$:
\begin{equation*}
  g(A(X,Y),Z)=0, \quad g(\tilde{T}(X,Y),Z)=0;
\end{equation*}
 hence
\begin{equation}\label{Ag}
A(X,Y)=g(A(X,Y),\xi)\xi,
\end{equation}
\begin{equation}\label{Tg}
 \tilde{T}(X,Y)=g(\tilde{T}(X,Y),\xi)\xi=-g([X,Y],\xi)\xi=2g(X, \varphi Y)\xi.
\end{equation}
In \eqref{Tg} we are using the parallelism of the distributions $\mathcal{D}(\pm \lambda)$ with respect to $\tilde{\nabla}$, which is a consequence of the fact that $\tilde{\nabla}h=0$.

Moreover we have:
\begin{equation}\label{An}
A(X,\xi)=\tilde{\nabla}_X\xi-\nabla_X\xi=\varphi X+\varphi hX.
\end{equation}
Then, using \eqref{Ag}, \eqref{Tg}, \eqref{An}, specializing at the point $o$ we obtain:
\begin{equation}\label{bR}
 \begin{aligned}
  \bar{R}_o(X,Y)Z = & R(X,Y)Z-g(A(Y,Z),J)A(X,J)+g(A(X,Z),J)A(Y,J)\\
                  & +2g(X,\varphi Y)A(J,Z)+[\tilde{T}(X,Y),Z]\\
                = & R(X,Y)Z-g(A(Y,Z),J)(\varphi X+\varphi hX)\\
                  & +g(A(X,Z),J)(\varphi Y+\varphi hY)+2g(X,\varphi Y)A(J,Z)\\
                  & +2g(X,\varphi Y)[J,Z],\\
 \end{aligned}
\end{equation}
where $X,Y,Z\in \mathfrak{b}$. 
The $(1,1)$-tensor $A(X,\cdot)$ is a skew symmetric tensor, being $\tilde{\nabla}g=0$. In particular
\begin{equation*}
 g(A(X,Y),\xi)=-g(Y,A(X,\xi)),
\end{equation*}
so that, by \eqref{An}:
\begin{equation*}
 g(A(X,Y),\xi)=-g(Y,\varphi X+\varphi hX).
\end{equation*}
Thus, equation \eqref{bR} becomes:
\begin{equation*}
 \begin{aligned}
  \bar{R}_o(X,Y)Z = & R(X,Y)Z+g(Z,\varphi Y+\varphi hY)(\varphi X+\varphi hX)\\
                  & -g(\varphi X+\varphi hX,Z)(\varphi Y+\varphi hY)+2g(X,\varphi Y)A(J,Z)\\
                  & +2g(X,\varphi Y)[J,Z].  
 \end{aligned}
\end{equation*}
%
%
%
Now, using Theorem~\ref{characterization of (k,mu)}: 
\begin{equation*}
\tilde{T}_o(J,Z)=-[J,Z]_{\mathfrak{m}}=-[J,Z];
\end{equation*}
on the other hand
\begin{equation*}
\begin{aligned}
 \tilde{T}(\xi,W) =& \tilde{\nabla}_{\xi}W-\tilde{\nabla}_W \xi-[\xi,W]\\
                  =& \nabla_{\xi}W+A(\xi,W)-[\xi,W]\\
                  =& -\varphi W-\varphi hW+A(\xi,W),
 \end{aligned}
\end{equation*}
for every $W$ vector field on $M$. Thus:
\begin{equation*}
 \begin{aligned}
  \bar{R}_o(X,Y)Z = & R(X,Y)Z+g(Z,\varphi Y+\varphi hY)(\varphi X+\varphi hX)\\
                  & -g(\varphi X+\varphi hX,Z)(\varphi Y+\varphi hY)+2g(X,\varphi Y)A(J,Z)\\
                  & -2g(X,\varphi Y)(-\varphi Z-\varphi hZ+A(J,Z))\\  
                = &   R(X,Y)Z+g(Z,\varphi Y+\varphi hY)(\varphi X+\varphi hX)\\
                  & -g(\varphi X+\varphi hX,Z)(\varphi Y+\varphi hY)+2g(X,\varphi Y)(\varphi Z+\varphi hZ).\\   
 \end{aligned}
\end{equation*}
Finally, taking into account the explicit expression of the curvature tensor $R$ of 
$M$ (see Theorem~\ref{(k,mu) curvature}), we obtain \eqref{R symmetric base}.
\end{proof}

\begin{corollary}\label{unicita}
 The affine base spaces $(B,\bar{\nabla})$ of a simply connected, complete, non-Sasakian, contact metric $(\kappa,\mu)$-manifold are 
all mutually equivalent affine symmetric spaces. 
\end{corollary}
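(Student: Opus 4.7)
The plan is to exploit the fact that the explicit formula \eqref{R symmetric base} for the curvature $\bar R_o$ of $B$ at the base point is written purely in terms of the intrinsic contact metric data $(g,\varphi,h,\kappa,\mu)$ of $M$ at the point $o\in M$ — it involves nothing that depends on the choice of $G$ or on the chosen reductive complement $\mathfrak{m}$ in \eqref{h+m}. Since affine symmetric spaces correspond bijectively to Lie triple systems via the assignment $(X,Y,Z)\mapsto -\bar R_o(X,Y)Z$ on the tangent space at the origin, this invariance of $\bar R_o$ is exactly what we need.

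First I would recall the standard fact (see e.g.\ Kobayashi–Nomizu, Vol.~II, Ch.~XI) that two simply connected, complete affine symmetric spaces are affinely equivalent if and only if their Lie triple systems at the respective origins are isomorphic as LTS, via an isomorphism of the underlying vector spaces intertwining the triple products. Thanks to Corollary~\ref{corb}, each base space $(B,\bar\nabla)$ obtained from a choice of transitive automorphism group $G$ and reductive decomposition \eqref{h+m} is automatically simply connected, complete, and symmetric, so this classification theorem applies.

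Next, fix a point $o\in M$ and observe that for every admissible choice the tangent space $T_oB$ can be canonically identified with the contact distribution $D_o\subset T_oM$: the fibration $M\to B$ is intrinsic to the contact form, and its differential at $o$ restricts to an isomorphism $D_o\to T_oB$ whose inverse realizes the identification $\mathfrak{b}\cong D_o$ used throughout Section~\ref{km}. Under this identification, Proposition~\ref{base space curvature} expresses $\bar R_o$ as an explicit polynomial in the endomorphisms $\varphi_o$ and $h_o$ of $D_o$ and the metric $g_o|_{D_o}$, with coefficients depending only on the real numbers $\kappa,\mu$. In particular $\bar R_o$, viewed as a tensor on $D_o$, does not depend on the pair $(G,\mathfrak{m})$ used to construct it.

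Consequently, any two choices produce isomorphic Lie triple systems on $D_o$ — indeed the \emph{same} triple product, once one identifies the respective tangent spaces at the origin with $D_o$. By the classification theorem recalled in the first step, the corresponding simply connected complete affine symmetric spaces $(B,\bar\nabla)$ are mutually affinely equivalent, and the induced isomorphism at the origin is, moreover, an isometry of the contact metric data restricted to $D_o$. The only step that requires any care is the verification that the canonical identifications $\mathfrak{b}\cong T_oB\cong D_o$ used for different reductive decompositions are genuinely compatible — that is, that they differ at most by an element of the isotropy action of $\bar H$ on $D_o$, which automatically preserves the intrinsic tensor \eqref{R symmetric base}. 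Once this compatibility is observed, the result follows immediately from the LTS classification of symmetric spaces.
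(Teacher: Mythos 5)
Your argument is correct and is essentially the paper's own (implicit) reasoning: the corollary is stated as an immediate consequence of Proposition~\ref{base space curvature} precisely because formula \eqref{R symmetric base} expresses $\bar{R}_o$ as a tensor on the contact distribution at the base point built only from $g,\varphi,h$ and the constants $\kappa,\mu$, independently of the choice of transitive group and reductive complement, so all choices yield the same Lie triple system and hence, by the standard correspondence for simply connected complete affine symmetric spaces, mutually equivalent base spaces. No gaps.
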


 \bigskip
 For a non-Sasakian contact metric $(\kappa,\mu)$-space the restriction of the $(1,1)$ tensor $\varphi$ to the horizontal distribution 
 does not induce a complex structure on the base space, as occurs in the homogeneous Sasakian case, because $h\not=0$.
 However, we shall see in the following that $B$ admits a \emph{standard} complex or para-complex structure, according to the following definition and Theorem~\ref{th_standard}.

\begin{definition}\label{def standard} \em
 Let $(M,\varphi,\xi,\eta,g)$ be a contact metric $(\kappa,\mu)$-manifold and $(B,\bar{\nabla})$ the base space of the canonical fibration  of $M$.
 
A $G^o$-invariant almost complex structure $\mathcal{J}$ on $B$ will be called  \emph{standard complex structure} provided 
its determination at the base point $o$ is of the form:
 \begin{equation}\label{complex standard}
  \mathcal{J}_o= 
  \left\{
\begin{aligned}
a \varphi,  & \quad \text{on } \mathfrak{b}_+\\
  \frac{1}{a} \varphi,  & \quad \text{on } \mathfrak{b}_-  
\end{aligned}
\right.,
\end{equation}
where $a$ is a positive constant.

\medskip
A \emph{standard para-complex structure} on $B$ is a $G^o$-invariant almost complex structure on $B$ 
whose determination at the base point $o$ is of the form:
 \begin{equation}\label{paracomplex standard}
  \mathcal{I}_o=
   \left\{
 \begin{aligned}
  a \varphi,  & \quad \text{on } \mathfrak{b}_+\\
  -\frac{1}{a} \varphi,  & \quad \text{on } \mathfrak{b}_-
 \end{aligned}
 \right.,
\end{equation}
where $a$ is a positive constant.
%
\end{definition}

\begin{remark}\em
 A (para-)complex structure $J$ on the vector space $\mathfrak{b}$ defined as in \eqref{complex standard} (resp. \eqref{paracomplex standard}) does not induce in general a $G^o$-invariant almost complex (resp. para-complex) structure on $B$. 
\end{remark}

%


\medskip

\begin{theorem}\label{th_standard}
 Let $(M,\varphi,\xi,\eta,g)$ be a simply connected, complete, contact metric $(\kappa,\mu)$-manifold and let
 $(B,\bar{\nabla})$ the symmetric base space of the canonical fibration of $M$.
Then:
\begin{enumerate}
 \item $|I_M|>1$ if and only if $B$ admits a standard complex structure.
 \item $|I_M|<1$ if and only if $B$ admits a standard para-complex structure.
\end{enumerate}

Moreover, in each case such a standard complex or para-complex structure is uniquely determined; precisely,
it corresponds to the following value of the constant $a$ in \eqref{complex standard}, \eqref{paracomplex standard}:
\begin{equation*}
 a=\sqrt{\frac{I_M+1}{I_M-1}},
\end{equation*} 
when $|I_M|>1$, and
\begin{equation*}
 a=\sqrt{-\frac{I_M+1}{I_M-1}},
\end{equation*}
when $|I_M|<1$. 
\end{theorem}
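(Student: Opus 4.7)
The plan is to invoke the correspondence from Proposition~III.1.4 of \cite{Bertram}: since $B$ is simply connected and symmetric, a $G^o$-invariant almost (para-)complex structure on $B$ is equivalent to an endomorphism $\mathcal{J}$ of $\mathfrak{b}$ with $\mathcal{J}^2 = \mp\mathrm{id}$ satisfying $\bar{R}(X,Y)\mathcal{J}Z = \mathcal{J}\bar{R}(X,Y)Z$ for all $X,Y,Z \in \mathfrak{b}$. The tensors from Definition~\ref{def standard} are built from $\varphi$, $h$ and the splitting $\mathfrak{b} = \mathfrak{b}_+ \oplus \mathfrak{b}_-$, all preserved by $\mathrm{Ad}(\bar{H})|_\mathfrak{b}$, so the isotropy invariance is automatic; moreover $\varphi^2 = -\mathrm{id}$ on $\mathcal{D}$ combined with $a\cdot\tfrac{1}{a}=1$ gives $\mathcal{J}^2 = -\mathrm{id}$, while $a\cdot(-\tfrac{1}{a})=-1$ gives $\mathcal{I}^2 = \mathrm{id}$. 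The whole content of the theorem then lies in verifying the LTS invariance, which I plan to do by a case analysis on the eigenspace positions of $X,Y,Z \in \mathfrak{b}_\pm$ using the curvature formula \eqref{R symmetric base}.

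Setting $\alpha := 1-\mu/2 = I_M\lambda$ and $\beta := \frac{1-\mu/2}{1-\kappa} = I_M/\lambda$, one has the key identity $\alpha = \beta\lambda^2$; together with $hX=\pm\lambda X$ on $\mathfrak{b}_\pm$, $\varphi h = -h\varphi$, $\mathfrak{b}_+\perp\mathfrak{b}_-$, and $g(\varphi X, Y) = -g(X, \varphi Y)$, this makes the ten terms of \eqref{R symmetric base} collapse dramatically in each case. When $X$ and $Y$ lie in the same eigenspace $\mathfrak{b}_\epsilon$ ($\epsilon = \pm 1$), $\bar{R}(X,Y)Z$ takes the sectional-curvature form with coefficient $2\lambda(I_M+\epsilon)$: either $2\lambda(I_M+\epsilon)[g(Y,Z)X - g(X,Z)Y]$ if $Z\in\mathfrak{b}_\epsilon$, or its $\varphi$-conjugated analogue if $Z\in\mathfrak{b}_{-\epsilon}$. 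These four cases verify the invariance automatically for every $a>0$, thanks to $g(\varphi X, \varphi Y) = g(X,Y)$. The remaining four mixed cases $X\in\mathfrak{b}_\epsilon$, $Y\in\mathfrak{b}_{-\epsilon}$, $Z\in\mathfrak{b}_\eta$ reduce to $\bar{R}(X,Y)Z = -2\lambda(I_M+\eta)g(\varphi X, Y)\varphi Z$; tracking the factors $a^{\pm 1}$ picked up by $\mathcal{J}$ on either side of the invariance equation, this collapses to the single scalar condition $a^2(I_M-1) = I_M+1$ in the complex case, and to $a^2(I_M-1) = -(I_M+1)$ in the para-complex case.

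Since $a$ must be a positive real, the first equation is solvable iff $(I_M+1)/(I_M-1)>0$, i.e., $|I_M|>1$, with unique solution $a=\sqrt{(I_M+1)/(I_M-1)}$; the second iff $|I_M|<1$, with unique solution $a=\sqrt{-(I_M+1)/(I_M-1)}$. This yields simultaneously existence, uniqueness, and the precise characterization in $I_M$ stated in the theorem. The main obstacle is the bookkeeping of the case analysis of $\bar{R}(X,Y)Z$: the substitutions $\alpha=I_M\lambda$, $\beta=I_M/\lambda$ together with the identity $\alpha = \beta\lambda^2$ are essential for the lengthy coefficients of \eqref{R symmetric base} to collapse to the clean form $2\lambda(I_M\pm 1)$, after which the invariance check reduces to the displayed quadratic in $a$.
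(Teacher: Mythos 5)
Your proposal follows the paper's own proof essentially verbatim: both reduce existence and uniqueness of the standard (para-)complex structure to the curvature/LTS-invariance condition $\bar{R}(X,Y)JZ=J\bar{R}(X,Y)Z$, checked case by case on the eigenspaces of $h$ using \eqref{R symmetric base}, with the same-eigenspace cases holding for every $a>0$ and the mixed cases collapsing to a single scalar equation; your $a^{2}(I_M-1)=\pm(I_M+1)$ is exactly the paper's $2\lambda-\mu+2=-a^{2}(\mu-2+2\lambda)$ rewritten via $2-\mu=2\lambda I_M$, and the sign analysis and the value of $a$ agree. One caveat: your side remark that the isotropy invariance is ``automatic'' because $\mathrm{Ad}(\bar{H})|_{\mathfrak{b}}$ preserves $\varphi$, $h$ and the splitting is false --- $\bar{\mathfrak{h}}$ contains $\mathbb{R}J$ and the Reeb flow does not preserve $\varphi$ precisely because $h\neq 0$; this is the content of the Remark following Definition~\ref{def standard}, and if it were true every $a>0$ would yield an invariant structure, contradicting the uniqueness you then prove. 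This slip is not load-bearing, since the LTS-invariance you actually verify is the correct criterion via the correspondence of Proposition~III.1.4 of Bertram, which is also what the paper relies on.
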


\begin{proof}
Let $(\mathfrak{b}, [\,,\,,\,])$ the Lie triple system associated to the symmetric space $(B,\bar{\nabla})$. The Lie triple product $[\,,\,,\,]$ is given by the curvature $\bar{R}$ of $\bar{\nabla}$ in the base point $o$:
 \begin{equation*}
  [X,Y,Z]=-\bar{R}_o(X,Y)Z.
 \end{equation*}
Let $J:\mathfrak{b}\rightarrow \mathfrak{b}$ be a complex structure on $\mathfrak{b}$ of the form: 
\begin{equation}\label{J complex}
  J= 
   \left\{
 \begin{aligned}
  a \varphi,  &\quad \text{on } \mathfrak{b}_+ \\
  \frac{1}{a} \varphi,  & \quad\text{on } \mathfrak{b}_-
 \end{aligned}
 \right.,
\end{equation}
where $a$ is a real parameter, $a>0$.

For every $X_+,Y_+,Z_+\in \mathfrak{b}_+$ and $X_-,Y_-,Z_-\in \mathfrak{b}_-$, using \eqref{R symmetric base} and \eqref{J complex}, 
by a direct computation, one can check that:

\begin{equation*}
 \begin{aligned}
   \bar{R}(X_+,Y_+)JZ_+  = & J \bar{R}(X_+,Y_+)Z_+,\quad
   \bar{R}(X_+,Y_+)JZ_-  =& J \bar{R}(X_+,Y_+)Z_-,\\
   \bar{R}(X_-,Y_-)JZ_+  =& J \bar{R}(X_-,Y_-)Z_+,\quad
    \bar{R}(X_-,Y_-)JZ_- =& J \bar{R}(X_-,Y_-)Z_-,\\
     \bar{R}(X_+,Y_-)JZ_- =& \frac{1}{a}(2 \lambda-\mu+2)g(\varphi X_+,Y_-)Z_-,\\		
     J \bar{R}(X_+,Y_-)Z_-=& -a (\mu-2+2\lambda)g(\varphi X_+,Y_-)Z_-.\\
 \end{aligned}
\end{equation*}

Hence, the condition:
$$
\bar{R}(X_+,Y_-)JZ_-=J \bar{R}(X_+,Y_-)Z_-,
$$
is satisfied for every $X_+\in \mathfrak{b}_+$, $Y_-,Z_-\in \mathfrak{b}_-$ if and only if there exists $a>0$ such that $2 \lambda-\mu+2=-a^2 (\mu-2+2\lambda)$.

If $\mu-2+2\lambda=0$ then also $2 \lambda-\mu+2=0$. It follows that $\kappa=1$, but by assumption $M$ is non-Sasakian; then it must be $\mu-2+2\lambda\neq0$ and
\begin{equation*}
 -\frac{2 \lambda-\mu+2}{2\lambda+\mu-2}>0.
\end{equation*}
This condition is equivalent to require that $|I_M|>1$.

\medskip

Finally: 
\begin{equation*}
 \begin{aligned}
   \bar{R}(X_+,Y_-)JZ_+ =&-a(2\lambda+\mu-2)g(\varphi X_+,Y_-)Z_+,\\
   J\bar{R}(X_+,Y_-)Z_+  =& \frac{1}{a}(2\lambda -\mu+2)g(\varphi X_+,Y_-)Z_+
 \end{aligned}
\end{equation*}

%
%
Thus 
$$
\bar{R}(X_+,Y_-)JZ_+  =J\bar{R}(X_+,Y_-)Z_+,
$$
for every $X_+,Z_+\in \mathfrak{b}_+$, $Y_-\in \mathfrak{b}_-$, if and only if there exist $a>0$ such that $2\lambda -\mu+2=-a^2(2\lambda+\mu-2)$.

\medskip

We conclude that the complex structure $J$ is invariant if and only if \\$|I_M|>1$. Moreover in this case 
$$
a= \sqrt{\frac{2-\mu+2 \lambda}{2-\mu-2\lambda}}.
$$

\medskip

With analogous considerations, we obtain that the para-complex structure defined on $\mathfrak{b}$ by:
\begin{equation}\label{I para-complex}
  I= 
   \left\{
 \begin{aligned}
  a \varphi,  &\quad\text{on } \mathfrak{b}_+ \\
  -\frac{1}{a} \varphi,  & \quad \text{on } \mathfrak{b}_-
 \end{aligned}
 \right.,
\end{equation}
where $a>0$, is an invariant para-complex structure if and only if $-1<I_M<1$. In this case 
$$
a= \sqrt{-\frac{2-\mu+2 \lambda}{2-\mu-2\lambda}}.
$$
\end{proof}

\begin{remark}\em
Cappelletti Montano, Carriazo and Mart\'{i}n Molina \cite{Carriazo} showed that every non-Sasakian contact metric $(\kappa,\mu)$-manifold $(M,\varphi,\xi,\eta,g)$ with $|I_M|>1$ admits a Sasakian structure $(\tilde{\varphi},\xi,\eta,\tilde{g})$ obtained by deforming the $(1,1)$-tensor $\varphi$ and the Riemannian metric $g$ as follows:
 \begin{equation*}
 \begin{aligned}
  & \tilde{\varphi}=\epsilon \frac{1}{(1-\kappa)\sqrt{(2-\mu)^2-4(1-\kappa)}}\mathcal{L}_{\xi}h \circ h, \\
  &\tilde{g}=-\di \eta(\cdot , \tilde{\varphi}\cdot)+\eta\otimes \eta,
  \end{aligned}
 \end{equation*}
 where 
 \begin{equation*}
 \epsilon=
  \left\{
 \begin{aligned}
 1 &\quad \text{ if } I_M>1\\
 -1 & \quad \text{ if } I_M<-1
 \end{aligned}
 \right..
 \end{equation*}
 
Moreover, for every point of $M$ there exists a local $CR$-symmetry \cite[Theorem~3.2]{DileoLotta}
Observe that the $CR$-symmetries preserve the tensor field $h$, and hence they preserve also $\tilde{\varphi}$ and $\tilde{g}$. 
By \cite[Proposition~3.3]{DileoLotta} we have that $(M,\tilde{\varphi},\xi,\eta,\tilde{g})$ is a Sasakian $\varphi$-symmetric space
 and then it fibers over a K\"ahler manifold $(B,\bar{\mathcal{J}},\bar{g})$ that is an Hermitian symmetric space \cite{takahashi}. 
One can check that $\bar{\mathcal{J}}$ coincides with the standard complex structure $\mathcal{J}$ on $B$ in our sense.
\end{remark}


\smallskip
\begin{proposition}
The standard (para-)complex structure on the base space $(B,\bar{\nabla})$ of a simply connected, complete, non-Sasakian, contact metric $(\kappa,\mu)$-manifold $M$ with $|I_M|>1$ ( $|I_M|<1$) is actually a twisted (para-)complex $G^o$-invariant structure.
%
\end{proposition}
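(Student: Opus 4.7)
The plan is to verify the twisting identity
$\bar R_o(\mathcal{J}X,Y)Z=-\bar R_o(X,\mathcal{J}Y)Z$
(and the analogous one with $\mathcal{I}$) directly from the explicit
curvature formula of Proposition~\ref{base space curvature}. Note that the
invariance condition $\bar R_o(X,Y)\mathcal{J}Z=\mathcal{J}\bar R_o(X,Y)Z$ is
already furnished by Theorem~\ref{th_standard}, so only the twisting sign
remains to be checked.

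By trilinearity of $\bar R_o$ it is enough to verify the identity when each
of $X,Y,Z$ lies in either $\mathfrak{b}_+$ or $\mathfrak{b}_-$. I would exploit
the standard facts $\varphi(\mathfrak{b}_\pm)=\mathfrak{b}_\mp$,
$g(\mathfrak{b}_+,\mathfrak{b}_-)=0$ and $h|_{\mathfrak{b}_\pm}=\pm\lambda\id$
to simplify each summand in \eqref{R symmetric base}. Two qualitatively
different configurations arise. When $X$ and $Y$ lie in the same eigenspace,
$\mathcal{J}X$ and $\mathcal{J}Y$ lie in the opposite eigenspace and pick up
the same scalar factor, so the identity reduces to the skew-symmetry of
$\bar R_o$ in its first two arguments together with
$g(\varphi X,Y)=-g(X,\varphi Y)$. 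When $X\in\mathfrak{b}_+$ and
$Y\in\mathfrak{b}_-$ (or vice versa), the scalings $a$ and $1/a$ enter
asymmetrically, and the twisting identity becomes precisely the relation
$a^2(2\lambda+\mu-2)+(2\lambda-\mu+2)=0$ already established in the proof of
Theorem~\ref{th_standard}.

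The para-complex case is identical in structure, the only change being that
$\mathcal{I}|_{\mathfrak{b}_-}=-\tfrac{1}{a}\varphi$, which flips a sign in
the mixed configuration and reduces the twisting identity to the other
numerical relation characterizing $a$ in Theorem~\ref{th_standard}, valid
when $-1<I_M<1$. The main obstacle is bookkeeping rather than conceptual:
there are many summands in \eqref{R symmetric base} and signs must be tracked
carefully, but once the eigenspace decomposition is invoked the verification
is mechanical.
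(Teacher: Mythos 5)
Your proposal is correct and coincides with the paper's own proof, which consists precisely of the remark that the twisting identity can be verified directly from equation~\eqref{R symmetric base}; your case-by-case organization via the eigenspace decomposition $\mathfrak{b}=\mathfrak{b}_+\oplus\mathfrak{b}_-$ and the observation that the mixed configuration reproduces the relation $a^2(2\lambda+\mu-2)+(2\lambda-\mu+2)=0$ from Theorem~\ref{th_standard} are consistent with that computation. The only caveat is that in the same-eigenspace configuration the required identity $\bar{R}_o(\varphi X,Y)Z=\bar{R}_o(\varphi Y,X)Z$ is not a formal consequence of skew-symmetry alone and still has to be read off from the individual summands of \eqref{R symmetric base}, but this is exactly the mechanical bookkeeping you already announce.
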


\begin{proof}
This can be easily verified directly using equation~\eqref{R symmetric base}.
\end{proof}

\medskip
\begin{theorem}\label{base spaces}
 Let $M^{2n+1}$ be a simply connected, complete, non-Sasakian, contact metric $(\kappa,\mu)$-manifold.
  Then:
 \begin{enumerate}
  \item  $I_M>1$ if and only if its twisted complex symmetric base space $(B,\bar{\nabla},\mathcal{J})$ 
  is the complexification $SO(n+2)/(SO(n)\times SO(2))$ of $S^n$. \label{sph}
  \item $-1<I_M<1$ if and only if its twisted para-complex symmetric base space $(B,\bar{\nabla},\mathcal{I})$ is 
the para-complexification $SO(n+1,1)/(SO(n)\times SO(1,1))$ of  $S^n$. \label{para sph}
  \item $I_M<-1$ if and only if its twisted complex symmetric base space $(B,\bar{\nabla},\mathcal{J})$ is the complexification $SO(n,2)/(SO(n)\times SO(2))$ of $\mathbb{H}^n$. \label{hyp}
 \end{enumerate}
\end{theorem}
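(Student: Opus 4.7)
The plan is to realize $(B,\bar{\nabla},\mathcal{J})$ (respectively $(B,\bar{\nabla},\mathcal{I})$) as a twisted (para-)complexification of $S^n$ or $\mathbb{H}^n$, which by Theorem~\ref{Thjor} and the explicit examples at the end of Section~2 identifies it with one of the three symmetric spaces in the statement.

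The key device will be the linear involution $\tau : \mathfrak{b} \to \mathfrak{b}$ defined by $\tau|_{\mathfrak{b}_+}=\id$ and $\tau|_{\mathfrak{b}_-}=-\id$. Using that $\mathfrak{b}_+$ and $\mathfrak{b}_-$ are mutually orthogonal, that $h$ acts as $\pm\lambda\,\id$ on $\mathfrak{b}_\pm$, and that $\varphi$ swaps $\mathfrak{b}_+$ and $\mathfrak{b}_-$ (since $\varphi h+h\varphi=0$), a direct term-by-term inspection of \eqref{R symmetric base} shows that $\tau$ is an automorphism of the Lie triple system $(\mathfrak{b},[\,,\,,\,])$ with $[X,Y,Z]=-\bar{R}_o(X,Y)Z$. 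By the very shape of \eqref{complex standard} and \eqref{paracomplex standard}, both $\mathcal{J}$ and $\mathcal{I}$ also exchange $\mathfrak{b}_+$ and $\mathfrak{b}_-$, so $\tau$ anti-commutes with each of them; together with $\tau^2=\id$ this means that $(\mathfrak{b},[\,,\,,\,],\mathcal{J},\tau)$ is a twisted complexification of the fixed LTS $\mathfrak{b}^\tau=\mathfrak{b}_+$ in cases \ref{sph} and \ref{hyp}, while $(\mathfrak{b},[\,,\,,\,],\mathcal{I},\tau)$ is a twisted para-complexification of $\mathfrak{b}_+$ in case \ref{para sph}.

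Next, restricting \eqref{R symmetric base} to $\mathfrak{b}_+$ eliminates all $\varphi$-terms by orthogonality, and substituting $hX=\lambda X$ and $1-\kappa=\lambda^2$ collapses the surviving terms to
\[
[X,Y,Z] = 2\lambda(1+I_M)\bigl(g(X,Z)Y-g(Y,Z)X\bigr), \qquad X,Y,Z\in \mathfrak{b}_+.
\]
After rescaling the inner product on $\mathfrak{b}_+$ by $|2\lambda(1+I_M)|$, this becomes the Lie triple system $(\mathbb{R}^n,R)$ of $S^n$ when $I_M>-1$, and the Lie triple system $(\mathbb{R}^n,-R)$ of $\mathbb{H}^n$ when $I_M<-1$. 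Combined with Theorem~\ref{th_standard} (according to which $|I_M|>1$ corresponds to the complex case and $|I_M|<1$ to the para-complex case) and the explicit (para-)complexifications of $S^n$ and $\mathbb{H}^n$ recorded at the end of Section~2, this yields the three homogeneous models of Table~\ref{tab:tableB}. Simple connectedness of $B$, already noted in Corollary~\ref{corb}, upgrades the infinitesimal identification to a global one.

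The main obstacle I anticipate is the uniqueness step implicit in this plan. Because Theorem~\ref{Thjor} puts twisted (para-)complexifications of a given Lie triple system in bijection with its Jordan extensions, one must argue that the Jordan extension induced on $\mathfrak{b}_+$ by our $(\mathcal{J},\tau)$ agrees with the specific one underlying the examples $SO(n+2)/(SO(n)\times SO(2))$, $SO(n+1,1)/(SO(n)\times SO(1,1))$ and $SO(n,2)/(SO(n)\times SO(2))$. This should follow from the essential uniqueness of an $O(n)$-invariant Jordan extension of $\pm R$ on $\mathbb{R}^n$; should doubts remain, it can be settled by computing the structure tensor $T(X,Y)Z=-\tfrac{1}{2}\bigl(\bar{R}_o(X,Y)Z-\mathcal{J}\bar{R}_o(X,\mathcal{J}^{-1}Y)Z\bigr)$ directly from \eqref{R symmetric base} and matching its restriction to $\mathfrak{b}_+$ with $\pm(\langle x,z\rangle y - \langle x,y\rangle z - \langle y,z\rangle x)$.
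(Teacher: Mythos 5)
Your proposal is correct and follows essentially the same route as the paper: your involution $\tau$ ($+\id$ on $\mathfrak{b}_+$, $-\id$ on $\mathfrak{b}_-$) is exactly the paper's $\tau=\tfrac{1}{\lambda}h$, the restricted bracket $2\lambda(1+I_M)\bigl(g(X,Z)Y-g(Y,Z)X\bigr)$ on $\mathfrak{b}_+$ matches the paper's $\bar{R}_+$ computation, and the decisive identification step you describe as a fallback --- computing the structure tensor of $\mathcal{J}$ (resp. $\mathcal{I}$), restricting it to $\mathfrak{b}_+$, and matching the resulting Jordan extension against Bertram's complexification diagrams via Theorem~\ref{Thjor} --- is precisely what the paper does.
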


\begin{proof}
 Consider the Lie triple system $(\mathfrak{b}, [\,,\,,\,])$ associated to the canonical symmetric base space $(B,\bar{\nabla})$.
The Lie triple commutator $[\;,\;,\;]:\mathfrak{b}\times \mathfrak{b}\times\mathfrak{b} \rightarrow \mathfrak{b}$, is given by:
 \begin{equation*}
  [X,Y,Z]=-\bar{R}_o(X,Y)Z,
 \end{equation*}
 where $\bar{R}$ is the curvature of $\bar{\nabla}$. By direct computation, using Proposition~\ref{base space curvature} we see 
 that the linear mapping 
\begin{equation*}
 \tau: X\in \mathfrak{b} \mapsto \frac{1}{\lambda}hX\in \mathfrak{b},
\end{equation*}
is an involutive automorphism of the LTS $(\mathfrak{b}, [\,,\,,\,])$.
 Thus the space $\mathfrak{b}^{\tau}$ of the $\tau$-fixed elements of $\mathfrak{b}$, together with the induced Lie triple bracket, is a Lie triple system. 
 Actually, being 
 $$
 \mathfrak{b}^{\tau}=\mathfrak{b}_+,
 $$ 
 and being the restriction $\bar{R}_+$ of $\bar R$ to $\mathfrak{b}_+$ given by
 $$
 \bar{R}_{+}(X_+,Y_+)Z_+=(2-\mu+2\lambda)(g(Y_+,Z_+)X_+-g(X_+,Z_+)Y_+),
 $$
 we have that the LTS $(\mathfrak{b}_+,-\bar{R}_+)$ is isomorphic to the LTS belonging to the
 sphere $S^n$ or the hyperbolic space $\mathbb{H}^n$, according to the circumstance that the Boeckx invariant $I_M$ is $> -1$ or $<-1$ respectively; indeed
 we have $2-\mu+2\lambda = 2\lambda\Big(I_M+1\Big).$

\bigskip
Suppose $|I_M|>1$. Let $J$ be the twisted complex structure on $\mathfrak{b}$ corresponding to the standard complex structure $\mathcal{J}$ of $B$.
Observe that $J\tau+\tau J=0$, being $\varphi h +h\varphi=0$. 
Then $(\mathfrak{b}, [\,,\,,\,], J,\tau)$ is a twisted complexification of $(\mathfrak{b}_+,-\bar{R}_+)$.

We recall that, by definition, the structure tensor $T$ of $\mathcal{J}$ at the base point $o$ is:
$$
T_o(X,Y)Z=-\frac{1}{2}\Big(\bar{R}_o(X,Y)Z+J\bar{R}_o(X,JY)Z\Big),
$$
and that its restriction $T_+$ to $\mathfrak{b}_+$ yields the Jordan extension
$(\mathfrak{b}_+,T_+)$ of the LTS $(\mathfrak{b}_+,-\bar{R}_+)$,
uniquely associated to its twisted complexification $(\mathfrak{b}, [\,,\,,\,], J,\tau)$
(see Theorem~\ref{Thjor}).

Computing  $T_+$ we obtain:
\begin{equation*}
 \begin{aligned}
  T_{+}(X_+,Y_+)Z_+ &=-\frac{1}{2}\Big( \bar{R}(X_+,Y_+)Z_+ +J\bar{R}(X_+,JY_+)Z_+\Big)\\
                &=\frac{\mu-2-2\lambda}{2}\Big( g(Y_+,Z_+)X_+-g(X_+,Z_+)Y_++g(X_+,Y_+)Z_+\Big).
 \end{aligned}
\end{equation*}
Hence, taking into account the complexification diagrams of the sphere and of the hyperbolic space \cite[Chapter~IV]{Bertram},  
we obtain assertions \ref{sph} and \ref{hyp}. 

\bigskip
Now suppose $|I_M|<1$ and denote by $I$ the twisted para-complex structure on $\mathfrak{b}$ corresponding to the standard para-complex structure $\mathcal{I}$ of $B$ at the base point.
We have that $I\tau+\tau I=0$, being $\varphi h +h\varphi=0$, and hence $(\mathfrak{b}, [\,,\,,\,], I,\tau)$ is a twisted para-complexification of $(\mathfrak{b}^{\tau},-\bar{R}_+)$. The structure tensor of $\mathcal{I}$ at the base point $o$ is:
$$
T_o(X,Y)Z=-\frac{1}{2}\Big(\bar{R}_o(X,Y)Z-I\bar{R}_o(X,IY)Z\Big).
$$
Then the Jordan extension of $\bar{R}_+$ uniquely associated to the twisted para-complexification $(\mathfrak{b}, [\,,\,,\,], I,\tau)$ of the LTS $(\mathfrak{b}_+,-\bar{R}_+)$ is:
\begin{equation*}
 \begin{aligned}
  T(X_+,Y_+)Z_+ &=-\frac{1}{2}\Big( \bar{R}(X_+,Y_+)Z_+ -I\bar{R}(X_+,IY_+)Z_+\Big)\\
                &=-\frac{2-\mu+2\lambda}{2}\Big( g(Y_+,Z_+)X_+-g(X_+,Z_+)Y_++g(X_+,Y_+)Z_+\Big).
 \end{aligned}
\end{equation*}
Then, comparing again with the complexification diagram of the sphere we obtain assertion \ref{para sph}. 

\end{proof}

\section{Homogeneous model spaces of contact metric $(\kappa, \mu)$-spaces}

In this section we complete our classification, showing that one can actually construct a contact metric $(\kappa,\mu)$-space with
prescribed Boeckx invariant starting from each of the symmetric spaces in Table~\ref{tab:tableB}. More precisely, we prove 

\begin{theorem}\label{bbba}
The simply connected,  complete, contact metric $(\kappa,\mu)$-spaces with Boeckx invariant different from $\pm 1$
can be classified as follows.

\medskip
a) The homogeneous space $SO(n,2)/SO(n)$ carries a one parameter family of invariant contact metric $(\kk,\mu)$ structures whose Boeckx invariant assumes all the values in $]-\infty,-1[$.

\medskip
b) The homogeneous space $SO(n+2)/SO(n)$ carries a one parameter family of invariant contact metric $(\kk,\mu)$ structures whose Boeckx invariant assumes all the values in $]1,+\infty[$.

\medskip
c) The homogeneous space $SO(n+1,1)/SO(n)$ carries a one parameter family of invariant contact metric $(\kappa,\mu)$ structures whose Boeckx invariant assumes all the values in $]-1,1[$.
\end{theorem}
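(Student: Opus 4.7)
The plan is to construct, for each of the three homogeneous spaces $G/H$ in the statement, a one-parameter family of $G$-invariant contact metric structures, and then to invoke Theorem~\ref{characterization of (k,mu)} (backward direction) to conclude the $(\kappa,\mu)$ property almost for free. The computation of the Boeckx invariant is then carried out separately.

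For each case fix a reductive decomposition $\mathfrak{g}=\mathfrak{h}\oplus\mathfrak{m}$ with the further refinement $\mathfrak{m}=\mathbb{R}\xi\oplus\mathfrak{b}$, where $\bar{\mathfrak{h}}=\mathfrak{h}\oplus\mathbb{R}\xi$ is the Lie algebra of the larger isotropy of the corresponding symmetric base $B$ of Table~\ref{tab:tableB}, so that $\xi$ generates the $\mathfrak{so}(2)$ (respectively $\mathfrak{so}(1,1)$) factor. The symmetric nature of $B$ automatically supplies $[\mathfrak{b},\mathfrak{b}]\subset\bar{\mathfrak{h}}$, so that \eqref{h+b} is a symmetric decomposition for $\mathfrak{g}$.

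Next, for each $a>0$ define an invariant almost contact metric structure $(\varphi_a,\xi,\eta,g_a)$ on $M=G/H$ as follows: take $\eta$ dual to $\xi$ in $\mathfrak{m}$, declare $\xi\perp\mathfrak{b}$, and split $\mathfrak{b}=\mathfrak{b}_+\oplus\mathfrak{b}_-$ according to the eigenspace decomposition that Theorem~\ref{th_standard} associates with the (para-)complexification structure on $B$. Rescale the $H$-invariant inner product on $\mathfrak{b}$ by reciprocal constants on $\mathfrak{b}_+$ and $\mathfrak{b}_-$, and define $\varphi_a$ to interchange them in the pattern of \eqref{complex standard} in the complex cases (a) and (b) and \eqref{paracomplex standard} in the para-complex case (c). A direct check verifies the almost contact identities, while the contact condition $d\eta(X,Y)=g_a(X,\varphi_a Y)$ reduces, via $d\eta(X,Y)=-\eta([X,Y])$ on invariant vectors, to a single normalization identity involving the projection of $[\mathfrak{b}_+,\mathfrak{b}_-]\subset\bar{\mathfrak{h}}$ onto $\mathbb{R}\xi$; this fixes the compatible scaling of the metric. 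Since the decomposition \eqref{h+b} is symmetric by construction, Theorem~\ref{characterization of (k,mu)} immediately gives that $M$ is a non-Sasakian contact metric $(\kappa,\mu)$-space for every $a\neq 1$.

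Finally, compute the Boeckx invariant as a function of $a$. The tensor $h=\tfrac{1}{2}\mathcal{L}_\xi\varphi_a$ is $G$-invariant and thus determined by its value at the origin, which can be read off from $\mathrm{ad}(\xi)|_{\mathfrak{b}}$ together with $\varphi_a$; this yields $\lambda(a)=\sqrt{1-\kappa}$. Alternatively, Theorem~\ref{th_standard} itself inverts to give $I_M$ in terms of the scaling parameter $a$, leading to explicit rational expressions of the form $I_M=(a^2+1)/(a^2-1)$ in the complex cases and $I_M=(1-a^2)/(1+a^2)$ (up to sign) in the para-complex case. The signature of the Killing form of $\mathfrak{g}$ forces the sign of $\eta([\mathfrak{b}_+,\mathfrak{b}_-])$ and hence selects the correct interval: $(1,+\infty)$ for $SO(n+2)/SO(n)$, $(-\infty,-1)$ for $SO(n,2)/SO(n)$, and $(-1,1)$ for $SO(n+1,1)/SO(n)$, as $a$ runs over $(0,\infty)\setminus\{1\}$. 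The main obstacle lies in the careful bookkeeping of normalization constants to guarantee that $\eta$ is indeed a contact form and in correctly pinning down the sign distinguishing the two complex models $SO(n+2)/SO(n)$ and $SO(n,2)/SO(n)$; once this is done, monotonicity of $I_M(a)$ yields surjectivity onto the stated interval and concludes the proof.
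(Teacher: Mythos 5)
Your proposal is correct and follows essentially the same route as the paper: an invariant almost contact metric structure on $G/SO(n)$ built by reciprocal rescaling of $\varphi$ and $g$ on the two summands of $\mathfrak{b}$, the contact condition checked via $\di\eta(X,Y)=-\tfrac12\eta([X,Y]_{\mathfrak m})$, the $(\kappa,\mu)$ property obtained from the symmetric decomposition through Theorem~\ref{characterization of (k,mu)}, and the Boeckx invariant extracted from $h_o=\tfrac12(\mathcal L_\xi\varphi)_o$ together with the uniqueness statement of Theorem~\ref{th_standard}. The paper simply carries out the matrix computations explicitly, which is where your ``bookkeeping of normalization constants'' is resolved.
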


%

%
%
%
%
%
%

\begin{proof}
Starting from a fixed Hermitian or para-Hermitian symmetric structure on each of the symmetric spaces
\begin{equation*}
 \begin{aligned}
  & B_1= SO(n+2)/(SO(n)\times SO(2)),\\
  & B_2= SO(n,2)/(SO(n)\times SO(2)),\\
   & B_3= SO(n+1,1)/(SO(n)\times SO(1,1)),\\
 \end{aligned}
\end{equation*} 
we shall construct explicitly a one parameter family of invariant contact metric $(\kk,\mu)$ structures on the homogeneous spaces
\begin{equation*}
 \begin{aligned}
  & M_1=SO(n+2)/SO(n),\\
  & M_2=SO(n,2)/SO(n),\\
   & M_3=SO(n+1,1)/SO(n),
 \end{aligned}
\end{equation*}
with $I_{M_1}>1$, $I_{M_2}<-1$ and $-1<I_{M_3}<1$.

\medskip

We first consider the symmetric Lie algebras $\mathfrak{g}_1:=\mathfrak{so}(n+2)$ and $\mathfrak{g}_2:=\mathfrak{so}(n,2)$ with symmetric decompositions:
\begin{equation*}
\mathfrak{g}_i=\mathfrak{h}_i\oplus \mathfrak{b}_i, 
\end{equation*}
where 
 \begin{equation*}
 \begin{aligned}
\mathfrak{h}_1=\mathfrak{h}_2 &:= \left\{ 
\left[
\begin{array}{c|c}
\begin{matrix}
            0 & -\lambda\\
            \lambda & 0
           \end{matrix} & \mathbf{0}  \\
\hline
\begin{matrix}
           \mathbf{0} & \mathbf{0}
           \end{matrix}   & \mathbf{a}
\end{array}
\right]: \lambda \in \mathbb{R} , \; \mathbf{a}\in \mathfrak{so}(n) \right\}=\mathfrak{so}(2)\oplus\mathfrak{so}(n),\\
 \mathfrak{b}_1 &:= \left\{ 
\left[
\begin{array}{c|c}
\begin{matrix}
            \mathbf{0}
           \end{matrix} & 
           \begin{matrix}
           -\mathbf{v}^T\\
           -\mathbf{w}^T
           \end{matrix}\\
\hline
\begin{matrix}
           \mathbf{v} & \mathbf{w}
           \end{matrix}   & \mathbf{0}
\end{array}
\right]: \mathbf{v}, \mathbf{w} \in \mathbb{R}^n  \right\}\simeq T_oB_1,\\
  \mathfrak{b}_2 &:= \left\{ 
\left[
\begin{array}{c|c}
\begin{matrix}
            \mathbf{0}
           \end{matrix} & 
           \begin{matrix}
           \mathbf{v}^T\\
           \mathbf{w}^T
           \end{matrix}\\
\hline
\begin{matrix}
           \mathbf{v} & \mathbf{w}
           \end{matrix}   & \mathbf{0}
\end{array}
\right]: \mathbf{v}, \mathbf{w} \in \mathbb{R}^n  \right\}\simeq T_oB_2.
\end{aligned}
 \end{equation*}
 The $Ad(SO(2)\times SO(n))$-invariant almost complex structure $J_i:\mathfrak{b}_i\rightarrow\mathfrak{b}_i$ defined by
 \begin{equation*}
  J_i(v \; w)=(-1)^i(w \; -v), 
 \end{equation*}
 and the $Ad(SO(2)\times SO(n))$-invariant metric $G_i$ on $\mathfrak{b}_i$:
 \begin{equation*}
  G_i((v \;w),(u\;z))=<v,u>+<w,z>,
 \end{equation*}
determine an invariant Hermitian symmetric structure $(\mathcal{J}_i,\bar{g}_i)$ on $B_i$; here
 $<,>$ denotes the standard inner product on $\mathbb{R}^n$ and  $(v\; w)$ denotes the matrix
 \begin{equation*}
  \left[
\begin{array}{c|c}
\begin{matrix}
            0 & 0\\
            0 & 0
           \end{matrix} &
            \begin{matrix}
           -\mathbf{w}^T\\
           -\mathbf{v}^T
           \end{matrix}\\
\hline
\begin{matrix}
           \mathbf{v} & \mathbf{w}
           \end{matrix}   & \mathbf{0}
\end{array}
\right]
 \end{equation*}
 in the case $i=1$, resp. the matrix 
 \begin{equation*}
  \left[
\begin{array}{c|c}
\begin{matrix}
            0 & 0\\
            0 & 0
           \end{matrix} &
            \begin{matrix}
           \mathbf{w}^T\\
           \mathbf{v}^T
           \end{matrix}\\
\hline
\begin{matrix}
           \mathbf{v} & \mathbf{w}
           \end{matrix}   & \mathbf{0}
\end{array}
\right]
 \end{equation*}
 in the case $i=2$. 
 Observe that the decomposition of $\mathfrak{g}_i$
  \begin{equation}\label{dec}
 \mathfrak{g}_i=\mathfrak{so}(n)\oplus \mathfrak{m}_i, 
\end{equation}

\begin{equation*}
 \mathfrak{m}_i:=\mathbb{R}\xi\oplus \mathfrak{b}_i, \quad 
 \xi:=\left[
\begin{array}{c|c}
\begin{matrix}
            0 & -1\\
            1 & 0
           \end{matrix} & \mathbf{0}  \\
\hline
\begin{matrix}
           \mathbf{0} & \mathbf{0}
           \end{matrix}   & \mathbf{0}
\end{array}
\right],
\end{equation*}
is a reductive decomposition for $M_i$. Indeed, for every
 \begin{equation*}
 a=\left[
\begin{array}{c|c}
\begin{matrix}
            1 & 0\\
            0 & 1
           \end{matrix} & \mathbf{0}  \\
\hline
\begin{matrix}
           \mathbf{0} & \mathbf{0}
           \end{matrix}   & \mathbf{a}
\end{array}
\right] \in SO(n), \quad X=s\xi + (v\;w) \in \mathfrak{m}_i
\end{equation*}
we have that $Ad(a)X=s\xi + (av\;aw)$. In particular, we have $Ad(a)\xi=\xi$ for every $a\in SO(n)$.

\medskip
We have a natural decomposition of $\mathfrak{b}_i$:
$$\mathfrak{b}_i=\mathfrak{p}_i\oplus\mathfrak{q}_i,$$
where 
$$\mathfrak{p}_i:=\{(v \; 0) \;|\; v\in \mathbb{R}^n\}, \quad \mathfrak{q}_i:=\{(0\;w)\; |\; w\in \mathbb{R}^n\}.
$$ 
By using this decomposition, we define on $\mathfrak{m}_i$ a $(1,1)$ tensor $\varphi_i$, a inner product $g_i$ and a $1$-form $\eta_i$ as follows:
\begin{equation}\label{phi}
\begin{aligned}
 \varphi_i (Z) &:=
  \left\{
 \begin{aligned}
          \alpha J Z  &\quad  \text{ if } Z\in \mathfrak{p}_i \\
          \frac{1}{\alpha} JZ &\quad \text{ if }  Z\in\mathfrak{q}_i\\
          0                  & \quad \text{ if } Z\in \mathbb{R}\xi
         \end{aligned}
         \right.,\\ 
 g_i(X,Y)&:=st+\frac{1}{2} \big(\alpha <v,u>+\frac{1}{\alpha}<w,z>\big),\quad\eta_i(X):=s,
 \end{aligned}
\end{equation}
 where $\alpha>0$,
and $X=s\xi + (v \; w)$, $Y=t\xi + (u \;z)$ are arbitrary elements of $\mathfrak{m}_i$.
These tensors are $Ad(SO(n))$-invariant, indeed for every $a\in SO(n)$:
\begin{equation*}
\begin{aligned}
 Ad(a)\varphi_i X&= Ad(a)\Big((-1)^i\big(\alpha (0 \; -v) +\frac{1}{\alpha} (w \; 0)\big)\Big)\\
                &= (-1)^i \big(\alpha (0 \; -av) +\frac{1}{\alpha} (aw \; 0)\big)\\
                &= \varphi_i Ad(a)X,\\ \\
  g_i(Ad(a)X, Ad(a)Y)&= g\big(s\xi+ (av \; aw), t\xi+ (au \;az)\big)\\
                       &= st+\frac{1}{2}\big(\alpha<av,au>+\frac{1}{\alpha}<aw,az>)\\
                       &= st+\frac{1}{2}\big(\alpha<v,u>+\frac{1}{\alpha}<w,z>)\\
                       &= g(X,Y),
 \end{aligned}
\end{equation*}
finally, being $Ad(a)\xi=\xi$, we also have that $Ad(a)^*\eta_i=\eta_i$. Observe that the invariance of $\eta_i$ implies
that,  for every $X\in \mathfrak{g}_i$ and $Y\in\mathfrak{X}(M_i)$
\begin{equation*}
 0=(\mathcal{L}_{X^*}\eta_i)Y=X^*(\eta_i Y)-\eta_i([X^*,Y])
\end{equation*}
where $X^*$ is the fundamental vector field determined by $X$.
Thus for every $X,Y\in \mathfrak{m}_i$:
\begin{equation*}
\begin{aligned}
 2\di\eta_i(X^*,Y^*) &= X^*(\eta_i Y^*)-Y^*(\eta_i X^*)-\eta_i([X^*,Y^*])\\
                   &= -\eta_i ([Y^*,X^*])\\
                   &= -\eta_i ([X,Y]^*).
\end{aligned}
 \end{equation*}
Evaluating this formula at the base point $o\in M_i$ yields:
\begin{equation}\label{eta}
 2{(\di\eta_i)}_{o}(X,Y)=-\eta_i ([X,Y]_{\mathfrak{m}_i}).
\end{equation}
By direct computations, using \eqref{phi}, \eqref{eta}, we obtain that 
\begin{equation*}
{(\di\eta_i)}_{o}(X,Y)={g_i}(X,\varphi_i Y),\qquad X,Y\in\mathfrak{m}_i.
\end{equation*}
This proves that the invariant tensors $(\varphi_i,\xi,\eta_i,g_i)$ make up a contact metric structure on $M_i$. Moreover it is a $K$-contact structure if and only if $\alpha=1$. Indeed, being $\xi$ and $\varphi_i$ invariant tensors on $M_i$, they are parallel with respect to the canonical connection $\tilde{\nabla}$ associated to the decomposition \eqref{dec}, hence:
\begin{equation*}
\begin{aligned}
 (\mathcal{L}_{\xi}\varphi_i)Y &= [\xi, \varphi_i Y]-\varphi_i [\xi,Y]\\
                             &= \tilde{\nabla}_{\xi}\varphi_i Y-\tilde{T}(\xi,\varphi_i Y)       -\varphi_i\big(\tilde{\nabla}_{\xi} Y-\tilde{T}(\xi, Y)\big)\\
                             &= -\tilde{T}(\xi,\varphi_i Y)+\varphi_i\tilde{T}(\xi, Y);
\end{aligned}
\end{equation*}
then
\begin{equation*}
\begin{aligned}
 2(h_i)_o(v\;w) & = (\mathcal{L}_{\xi}\varphi_i)_o(v\;w)\\
                & =[\xi,\varphi_i(v \; w)]-\varphi_i[\xi,(v\;w)]\\
                & =(-1)^i[\xi, (\frac{1}{\alpha}w \quad -\alpha v)]-\varphi_i (-w \; v)\\
                & =(-1)^i(\alpha v \quad \frac{1}{\alpha}w)-(-1)^i(\frac{1}{\alpha}v \quad \alpha w)\\
                & =(-1)^i\Big(\frac{\alpha^2-1}{\alpha}v \quad -\frac{\alpha^2-1}{\alpha}w\Big).
\end{aligned}                
\end{equation*}
Applying Theorem~\ref{characterization of (k,mu)} we see that $(\varphi_i,\xi,\eta_i,g_i)$ is a contact metric $(\kk,\mu)$ structure on $M_i$ for every $\alpha>0, \alpha\neq1$; moreover, by construction, $\mathcal{J}_i$
is a standard complex structure on the base space $B_i$ of the canonical fibration of $M_i$, in the sense of Definition~\ref{def standard}.
In particular if $0<\alpha<1$ then, by the uniqueness result in Theorem~\ref{th_standard} we must have
\begin{equation*}
\sqrt{\frac{I_{M_1}+1}{I_{M_1}-1}}=\frac{1}{\alpha}, \quad \sqrt{\frac{I_{M_2}+1}{I_{M_2}-1}}=\alpha,
\end{equation*}
or equivalently
\begin{equation*}
 I_{M_1}=\frac{1+\alpha^2}{1-\alpha^2}, \quad I_{M_2}=-\frac{1+\alpha^2}{1-\alpha^2}. 
\end{equation*}
Thus, as $\alpha$ varies in $]0,1[$, $I_{M_1}$ assumes all the values in $]1,+\infty[$ and $I_{M_2}$ assume all the values in $]-\infty, -1[$.

\bigskip

Now we consider the Lie algebra  $\mathfrak{g}:=\mathfrak{so}(n+1,1)$ with symmetric decomposition $\mathfrak{g}=\bar{\mathfrak{h}}\oplus \mathfrak{b}$,
where 
\begin{equation*}
 \begin{aligned}
& \bar{\mathfrak{h}}:= \left\{ 
\left[
\begin{array}{c|c}
\begin{matrix}
            0 & \lambda\\
            \lambda & 0
           \end{matrix} & \mathbf{0}  \\
\hline
\begin{matrix}
           \mathbf{0} & \mathbf{0}
           \end{matrix}   & \mathbf{a}
\end{array}
\right]: \lambda \in \mathbb{R} , \; \mathbf{a}\in \mathfrak{so}(n) \right\}=\mathfrak{so}(1,1)\oplus\mathfrak{so}(n),\\
 & \mathfrak{b}:= \left\{ 
\left[
\begin{array}{c|c}
\begin{matrix}
            \mathbf{0}
           \end{matrix} & 
           \begin{matrix}
           \mathbf{v}^T\\
           -\mathbf{w}^T
           \end{matrix}\\
\hline
\begin{matrix}
           \mathbf{v} & \mathbf{w}
           \end{matrix}   & \mathbf{0}
\end{array}
\right]: \mathbf{v}, \mathbf{w} \in \mathbb{R}^n  \right\}\simeq T_oB_3.
\end{aligned}
 \end{equation*}
Let $(\mathcal{I},\bar{g})$ be the para-Hermitian structure on $B_3$ determined by the $Ad(SO(1,1)\times SO(n))$-invariant structure $(I,G)$ on $\mathfrak{b}$:
\begin{equation*}
\begin{aligned}
 I(v \;w) &:= (w \;v),\\
 G((v \;w), (u \;z))&:= <v,u>-<w,z>,
\end{aligned}
 \end{equation*}
where $(v \;w)$ denotes the matrix 
\begin{equation*}
 \left[
\begin{array}{c|c}
\begin{matrix}
            \mathbf{0}
           \end{matrix} & 
           \begin{matrix}
           \mathbf{v}^T\\
           -\mathbf{w}^T
           \end{matrix}\\
\hline
\begin{matrix}
           \mathbf{v} & \mathbf{w}
           \end{matrix}   & \mathbf{0}
\end{array}
\right]\in \mathfrak{b}.
\end{equation*}
The homogeneous space $SO(n+1,1)/SO(n)$ is reductive with respect to the decomposition
\begin{equation*}
 \mathfrak{so}(n+1,1)=\mathfrak{so}(n)\oplus \mathfrak{m},
\end{equation*}
where 
\begin{equation*}
 \mathfrak{m}:=\mathfrak{so}(1,1)\oplus\mathfrak{b}=\mathbb{R}\xi\oplus \mathfrak{b}, \quad 
 \xi:=\left[
\begin{array}{c|c}
\begin{matrix}
            0 & 1\\
            1 & 0
           \end{matrix} & \mathbf{0}  \\
\hline
\begin{matrix}
           \mathbf{0} & \mathbf{0}
           \end{matrix}   & \mathbf{0}
\end{array}
\right];
\end{equation*}
indeed
\begin{equation*}
 Ad(a)(s\xi+(v\;w))=s\xi+(av\;aw),
\end{equation*}
for every $a\in SO(n)$, $X=s\xi+(v\;w)\in \mathfrak{m}$.

\medskip
Now we consider the natural decomposition of $\mathfrak{b}$:
$$\mathfrak{b}=\mathfrak{p}\oplus\mathfrak{q},$$
where 
$$
\mathfrak{p}:=\{(v \; 0) \;|\; v\in \mathbb{R}^n\} \subset \mathfrak{b},\quad \mathfrak{q}:=\{(0\;w)\; |\; w\in \mathbb{R}^n\} \subset \mathfrak{b}.
$$
Using this decomposition, we define on $\mathfrak{m}$ the following $Ad(SO(n))$-invariant tensors:
\begin{equation}
\begin{aligned}
 \varphi (Z) &:=
  \left\{
 \begin{aligned}
          \alpha JZ   & \quad \text{ if } Z\in\mathfrak{p} \\
          -\frac{1}{\alpha} JZ &\quad \text{ if }  Z\in\mathfrak{q}\\
          0                  & \quad \text{ if } Z\in\mathbb{R}\xi
         \end{aligned}
         \right.,\\ 
 g(X,Y)&:=st+\frac{1}{2} \big(\alpha <v,u>+\frac{1}{\alpha}<w,z>\big),\quad\eta(X):=s,
 \end{aligned}
\end{equation}
where  $\alpha>0$ and $X=s\xi + (v \; w)$, $Y=t\xi + (u \;z)$ are any matrices in $\mathfrak{m}$.
One checks by the same method used above that $(\varphi,\xi,\eta,g)$ is a contact metric $(\kk,\mu)$--structure. 
Moreover 
\begin{equation*}
 2h_o(v\;w)=( -\frac{\alpha^2+1}{\alpha}v \quad \frac{\alpha^2+1}{\alpha}w).
\end{equation*}
Then applying again Theorem~\ref{th_standard} we get $$I_{M_3}=\frac{\alpha^2-1}{\alpha^2+1},$$ 
and hence, as $\alpha$ varies in $\mathbb{R}_+^*$, $I_{M_3}$ assumes all the values in $]-1,1[$.
\end{proof}

\begin{remark}\em
Of course, in the case $I>1$ we recover, up to isomorphism, the unit tangent sphere bundle $T_1M$ of a Riemannian manifold $(M,g)$ with constant sectional curvature $c>0$, $c\neq 1$.

In the case $I<-1$, we obtain a new homogeneous representation of the contact metric $(\kappa,\mu)$
manfolds $M$ with $I_M< -1$, different from the Lie group representation furnished by
Boeckx.  Actually these models can be geometrically interpreted also as tangent
hyperquadric bundle over Lorentzian space forms, as showed in \cite{Loi-Lo}.
\end{remark}

\begin{remark}\em
The homogeneous model spaces of the contact metric $(\kappa,\mu)$-manifolds here obtained also appear in the classification list of the simply connected sub-Riemannian symmetric spaces carried out by Bieliavsky, Falbel and Gorodski in \cite{FalbelGor}. However, in their paper the contact metric structures are not considered. 
\end{remark}

\vspace{1 cm}

\noindent 
Eugenia Loiudice

\vspace{1 mm}
\noindent Dipartimento di Matematica, Universit\`a di Bari Aldo Moro, 

\noindent Via Orabona 4, 70125 Bari, Italy

\noindent \emph{e-mail}: eugenia.loiudice@uniba.it

\vspace{3 mm}
\noindent 
Antonio Lotta

\vspace{1 mm}
\noindent Dipartimento di Matematica, Universit\`a di Bari Aldo Moro, 

\noindent Via Orabona 4, 70125 Bari, Italy

\noindent \emph{e-mail}: antonio.lotta@uniba.it

\end{document}